\newtheorem{theorem}{Theorem}[section]
\newtheorem{lemma}[theorem]{Lemma}
\newtheorem{proposition}[theorem]{Proposition}
\theoremstyle{definition}
\theoremstyle{remark}
\newtheorem{remark}[theorem]{Remark}
\numberwithin{equation}{section}
\DeclareMathOperator{\Gl}{GL}
\DeclareMathOperator{\su}{SU}
\DeclareMathOperator{\Sl}{SL}
\newcommand{\CC}{\mathbb C}
\newcommand{\BB}{\mathbb B}
\newcommand{\PP}{\mathbb P}
\newcommand{\QQ}{\mathbb Q}
\newcommand{\RR}{\mathbb R}
\newcommand{\LL}{\mathbb L}
\newcommand{\NN}{\mathbb N}
\newcommand{\ZZ}{\mathbb Z}
\newcommand{\G}{\mathbf G}
\newcommand{\Pp}{\mathbf P}
\newcommand{\dd}{{\mathbf d}}
\newcommand{\aaa}{\mathfrak a}
\newcommand{\pP}{\mathfrak p}
\newcommand{\OO}{\mathcal O}
\begin{document}
\title{Invariants of some compactified Picard modular surfaces and applications}
\author{Amir D\v{z}ambi\'c\\ \\ Institut f\"ur Mathematik\\ Goethe-Universit\"at Frankfurt\\ 
Robert-Mayer Str.~6-8, 60325 Frankfurt am Main\\ Email: dzambic@math.uni-frankfurt.de}
\date{}
\maketitle
\begin{abstract}
The paper investigates invariants of compactified Picard modular surfaces by principal congruence subgroups of Picard modular groups. The applications to the surface classification and modular forms are discussed.
\end{abstract}

\section{Introduction}
The following note should be considered as a supplement to the work of R.~P.~Holzapfel on invariants of Picard modular surfaces which are quotients of the two dimensional complex ball by principal congruence subgroups.\\ In his papers \cite{Ho:min} and \cite{Ho:zetadim}, Holzapfel developes concrete formulas for the Chern numbers and related invariants of compactified ball quotients by principal congruence subgroups $\Gamma_K(N)$ of Picard modular groups $\Gamma_K=\su((2,1),\OO_K)$ where $K=\QQ(\sqrt{-d})$ is an imaginary quadratic field and $N$ is a positive integer. There, he uses results which are mainly developed in \cite{Ho:bsa}. Using Riemann-Roch theory in combination with proportionality theorems, he also gets informations on the classification of these compactified ball quotients and dimensions of spaces of cusp forms relative to the congruence subgroups.\\
In this article we slightly extend Holzapfel's results considering not only principal congruence subgroups by natural numbers, but also general integral ideals $\aaa$ of the quadratic field $K=\QQ(\sqrt{-d})$. Including some technical number theoretical details, Holzapfel's arguments can also be applied to this slightly more general and larger class of congruence subgroups. One technical result is a formula for the index $[\Gamma_K:\Gamma_K(\aaa)]$. The final results in principle do not differ from those obtained in \cite{Ho:min} und \cite{Ho:zetadim}: We characterize the class of principal congruence subgroups -- excluding few technically caused possible ``exceptional cases'' -- for which the (smooth) compactified ball quotient is a surface of general type. For these surfaces the ``coordinates'' $(c_2,c_1^2)$ in the surface geography are explicitly known. The dimensions of spaces of cusp forms can also be computed explicitely.\\

\section{Picard modular groups and their congruence subgroups}
Let $K=\QQ(\sqrt{-d})$ be an imaginary quadratic field, and denote $\OO_K$ the ring of integers of $K$. Let $V$ be a 3-dimensional $K$-vector space equipped with a hermitian form $h:V\times V\longrightarrow K$. Let us assume that the signature of $h$ is $(2,1)$, i.~e.~that $h$ has two positive and one negative eigenvalue. Such $h$ can, for instance be represented by the diagonal matrix $\textrm{diag}(1,1,-1)$, but often it is useful to use another hermitian form.
We consider the special unitary group 
\[\G:=\su(h)=\{g\in \Sl_3(\CC)\mid h(gv,gw)=h(v,w)\ \text{for all} v,w\in V_{\CC}\}\] 
as an algebraic group defined over $\QQ$. Its group of $\QQ$-rational points is 
\[\G(\QQ):=\su(h,K)=\{g\in \Sl_3(K)\mid h(gv,gw)=h(v,w),\ \textrm{for\ all}\ v,w\in V\}.\]
Often the interpretation of $\G$ as a group corresponding to an involution is useful. For this, we remark that the map $\iota=\iota_h$ which associates with every matrix $g\in M_3(K)$ the matrix $g^{\iota_h}:=M_h \bar g^{t} M_h^{-1}$, $M_h$ denoting the matrix which represents $h$ with respect to a suitable basis ($\textrm{diag}(1,1,-1)$ say), defines an involution of second kind on the matrix algebra $M_3(K)$. This means that $\iota$ is an anti-automorphism of $M_3(K)$ which acts as the complex conjugation, when restricted to the diagonal matrices $\textrm{diag}(\alpha,\alpha,\alpha)$. In this description the group $\G$ appears as the groups of all matrices $g\in \Sl_3$ such that $gg^{\iota_h}=1_3$.\\
 
We define the so-called {\bf full Picard modular group} as
\begin{equation*}
 \Gamma_{K}=\su(h,\OO_K)=\Sl_3(\OO_K)\cap \G(\QQ).
\end{equation*}
$\Gamma_K$ is obviously an arithmetic subgroup in $\G(\QQ)$ as well as its subgroups of finite index which we simply call {\bf Picard modular groups}. If $\mathfrak a$ is an ideal of $\OO_K$, let $\Gamma_K(\mathfrak a)$ denote the {\bf principal congruence subgroup} of $\Gamma_K$ with respect to $\mathfrak a$. It is defined as the subgroup of $\Gamma_K$ consisting of all elements $\gamma\in \Gamma_K$ such that $\gamma-1_3\in M_3(\aaa)$. In other words, $\Gamma_K(\aaa)$ can be defined as the kernel of the canonical reduction map 
\[\rho: \Gamma_K \longrightarrow \Sl_3(\OO_K/\aaa).\]
We define the {\bf level} of $\Gamma_K(\mathfrak a)$ as the absolute norm $N(\mathfrak a)=|\OO_K/\aaa|$. There are finitely many principal congruence subgroups of fixed level. By definition a {\bf congruence subgroup} of $\Gamma_K$ is a group which contains a principal congruence subgroup. Since every principal congruence subgroup has a finite index in $\Gamma_K$ (which is obvious by definition), all congruence subgroups are Picard modular groups. In this section we will treat the following two technical problems:
\begin{itemize}
\item Compute the index $[\Gamma_K:\Gamma_K(\aaa)]$.
\item Determine $\aaa$ for which the principal congruence subgroup $\Gamma_K(\aaa)$ is a neat subgroup.
\end{itemize}
Thereby we say that an arithmetic group is {\bf neat}, if the subgroup of $\CC^{\ast}$ generated by the eigenvalues of all elements in $\Gamma$ is torsion free. A neat group is obviously torsion free. 
\subsection{Index computations}
In order to compute the index $[\Gamma_K:\Gamma_K(\aaa)]$, we make use of some local-to-global principles which are applicable in the present case.\\
For a prime number $p$ the group of $\QQ_p$-rational points $\G(\QQ_p)$ is 
\begin{equation}
\label{gqp}
\G(\QQ_p)=\{g\in \Sl_3(K\otimes_{\QQ} \QQ_p)\mid gg^{\iota_p}=1_3 \},
\end{equation}
where we write $\iota_p$ for the natural extension of $\iota_h$ to the algebra $M_3(K\otimes_{\QQ} \QQ_p)$. The group $\G(\ZZ_p)$ is defined in an obvious manner. We can also define principal congruence subgroups $\G(\ZZ_p)(\aaa)$ of $\G(\ZZ_p)$ in the same way as in the case $\Gamma_K$, now taking two-sided ideals $\aaa$ of the order $\OO_K\otimes \ZZ_p$ into account. The following lemma is the starting point of the index calculation. It is an important consequence of the strong approximation.
\begin{lemma}
\label{strongappr}
Let $\aaa\subset \OO_K$ be an integral ideal of $K$ and let $\aaa=\pP_1^{e_1}\cdots\pP_t^{e_t}$ be the prime ideal decomposition of $\aaa$, and denote $p_i=\pP_i\cap\ZZ$ the unique integer prime over which $\pP$ lies. Then, there is an isomorphism
\[\Gamma_K/\Gamma_K(\aaa)\cong \prod_{i=1}^{t}\G(\ZZ_{p_i})/\G(\ZZ_{p_i})(\pP^{e_i}).\]
In particular 
\[[\Gamma_K:\Gamma_K(\aaa)]=\prod_{i=1}^{t}[\G(\ZZ_{p_i}):\G(\ZZ_{p_i})(\pP^{e_i})]\]
\end{lemma}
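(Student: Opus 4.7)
The plan is to combine two classical ingredients: strong approximation for the simply connected, $\QQ$-isotropic group $\G=\su(h)$ (whose real points $\SU(2,1)$ are non-compact since $h$ has signature $(2,1)$), and the Chinese Remainder Theorem for the ring $\OO_K/\aaa$. Strong approximation gives surjectivity of the global-to-local reduction map, and CRT identifies its kernel with $\Gamma_K(\aaa)$.

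More precisely, I would first set up the adelic picture. Because $\G$ is simply connected, semisimple and $\G(\RR)$ is non-compact, the group $\G(\QQ)$ is dense in the finite adele group $\G(\A_f)=\prod_p'\G(\QQ_p)$, and hence $\Gamma_K=\G(\QQ)\cap\prod_p\G(\ZZ_p)$ is dense in $\prod_p\G(\ZZ_p)$ with its product topology. Choose the open subgroup
\[
U=\prod_{i=1}^t\G(\ZZ_{p_i})(\pP_i^{e_i})\ \times\ \prod_{p\notin\{p_1,\dots,p_t\}}\G(\ZZ_p),
\]
which has finite index in $\prod_p\G(\ZZ_p)$. Strong approximation then yields a surjection
\[
\Gamma_K\ \twoheadrightarrow\ \Big(\prod_p\G(\ZZ_p)\Big)\big/U\ \cong\ \prod_{i=1}^t\G(\ZZ_{p_i})/\G(\ZZ_{p_i})(\pP_i^{e_i}).
\]

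Next I would identify the kernel. A global element $\gamma\in\Gamma_K$ lies in the kernel iff its image in each factor $\G(\ZZ_{p_i})$ lies in $\G(\ZZ_{p_i})(\pP_i^{e_i})$, i.e.\ iff $\gamma-1_3\in M_3(\pP_i^{e_i}\OO_{K,p_i})$ for every $i$. For primes $p\notin\{p_1,\dots,p_t\}$ there is no condition, consistent with $\pP_i^{e_i}\OO_{K,p}=\OO_{K,p}$ there. The CRT isomorphism $\OO_K/\aaa\cong\prod_{i=1}^t\OO_K/\pP_i^{e_i}\cong\prod_{i=1}^t\OO_{K,p_i}/\pP_i^{e_i}\OO_{K,p_i}$ applied entrywise to matrices translates this system of local congruences into the single global condition $\gamma-1_3\in M_3(\aaa)$, which is exactly the definition of $\Gamma_K(\aaa)$. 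The ``In particular'' statement about the index is then automatic by taking cardinalities.

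The step I expect to require the most care is the verification of strong approximation in the precise integral form needed here: one must confirm that $\G$ actually satisfies the standard hypotheses (simply connectedness of $\su$, $\QQ$-simplicity, and non-compactness of $\G(\RR)$), and that one may really restrict the adelic density statement to the standard maximal compact $\prod_p\G(\ZZ_p)$ with the integral structure induced by $\OO_K$. Modulo this check, the remainder of the argument is essentially bookkeeping with CRT and the definition of $\Gamma_K(\aaa)$.
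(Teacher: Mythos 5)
Your proposal is correct and follows essentially the same route as the paper: the paper also defines the reduction map $\Gamma_K\to\prod_i\G(\ZZ_{p_i})/\G(\ZZ_{p_i})(\pP_i^{e_i})$, identifies its kernel with $\Gamma_K(\aaa)$ directly from the definition, and derives surjectivity from strong approximation for $\G$, merely stating that theorem in its congruence form rather than in your adelic formulation. The only difference is one of packaging (adelic density plus an open subgroup $U$ versus an explicit simultaneous-congruence statement), so the two arguments are the same in substance.
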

\begin{proof}
Let $\lambda$ be the map
\[\lambda: \Gamma_K\longrightarrow \prod_{i=1}^{t}\G(\ZZ_{p_i})/\G(\ZZ_{p_i})(\pP^{e_i})\]
given by
\[\gamma\mapsto (\gamma\bmod \pP_1^{e_1},\ldots,\gamma\bmod \pP_t^{e_t}).\] 
The kernel of $\lambda$ is obviously $\Gamma_K(\aaa)$, by definition. To show the surjectivity we recall the strong approximation property: For any given finite set of primes, $p_1,\dots,p_t$ say, and given elements $g_{p_1}\in\G(\ZZ_{p_1}),\ldots,g_{p_t}\in\G(\ZZ_{p_t})$ as well as exponents $e_1,\ldots,e_t$ one can find an element $g\in \G(\ZZ)=\Gamma_K$ such that $g\equiv g_{p_i}\bmod p_i^{e_i}$ for $i=1,\ldots,t$. But as $\pP_i|p_i$ this $g$ satisfies $g\equiv g_{p_i}\bmod \pP_i^{e_i}$. This already proves the surjectivity of $\lambda$.
\end{proof}
In order to compute the local indices, we first need to know more about the local groups $\G(\QQ_p)$. Looking at their definition (\ref{gqp}) we see that their structure highly depends on the structure of $K\otimes \QQ_p$, which itself is determined by the decomposition behaviour of the prime $p$. Essentially there are two cases:
\begin{enumerate}
\item[i)] $p$ is decomposed in $K=\QQ(\sqrt{-d})$. In this case $K\otimes \QQ_p\cong K_{\pP}\oplus K_{\bar\pP}$ with two conjugate prime ideals $\pP\neq \bar \pP$ in $\OO_K$ such that $p=\pP\bar \pP$. Therefore $M_3(K\otimes \QQ_p)\cong M_3(K_{\pP})\oplus M_3(K_{\bar \pP})$. Since $p$ is decomposed in $K$, $\left ( \frac{-d}{p}\right )=1$ and $-d$ is a square in $\QQ_p$. Therefore $K_{\pP}\cong K_{\bar \pP}\cong\QQ_p$. The extension of the field automorphism $\bar \cdot: K\longrightarrow K$ to $K\otimes \QQ_p$ must be an involution on $K_{\pP}\oplus K_{\bar \pP}\cong \QQ_p\oplus \QQ_p$ and the only possibility is $\overline{(x,y)}=(y,x)$. Therefore the extension $\iota_p$ of $\iota=\iota_h$ is given by changing the summands in $M_3(\QQ_p)\oplus M_3(\QQ_p)$. Now it is easy to see that the projection on one of the summands gives an isomorphy between $\G(\QQ_p)$ and $SL_3(\QQ_p)$, since $\G(\QQ_p)$ is defined as the group consisting of those pairs $(g,h)\in \Sl_3(\QQ_p)\oplus \Sl_3(\QQ_p)$ with $(g^{-1},h^{-1})=(h,g)$.

\item[ii)] $p$ is non-decomposed in $K$. In this case there is the unique prime ideal $\pP$ lying over $p$ such that $K\otimes \QQ_p=K_{\pP}$, which is a quadratic extension of $\QQ_p$. The extended involution $\iota_p$ is an involution of second kind on $M_3(K_{\pP})$ given by $g\mapsto M_h \bar g M_h^{-1}$ and $\G(\QQ_p)=\su(h,K_{\pP})$. 
\end{enumerate}

\begin{lemma}
\label{localisom}
If $p$ is non-decomposed, $\G(\QQ_p)\cong \su_3(K_{\pP})$.
\end{lemma}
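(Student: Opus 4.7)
The plan is to show that in case (ii) the hermitian form $h$, regarded as a form on $K_{\pP}^3$, is similar to (i.e.\ a scalar multiple of) a fixed ``standard'' hermitian form, so that $\G(\QQ_p)=\su(h,K_{\pP})$ is literally the group of $K_{\pP}/\QQ_p$-points of the standard rank-three unitary group, denoted $\su_3(K_{\pP})$.

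First I would record that $K_{\pP}/\QQ_p$ is a (ramified or unramified) quadratic field extension, so one is in the classical setting of hermitian forms over a local quadratic extension. Next I would invoke the Landherr/Jacobowitz classification: a non-degenerate hermitian form over $K_{\pP}/\QQ_p$ of rank $n$ is determined up to isometry by its discriminant class in $\QQ_p^{\ast}/\Nrd(K_{\pP}^{\ast})$, and this quotient has order $2$. So there are at most two isometry classes of rank three hermitian forms on $K_{\pP}^3$.

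Then comes the odd-rank trick. If I replace $h$ by $c\cdot h$ for some $c\in\QQ_p^{\ast}$ the discriminant is multiplied by $c^{3}$. Since the cubing map is surjective on $\QQ_p^{\ast}/\Nrd(K_{\pP}^{\ast})\cong\ZZ/2\ZZ$, I can choose $c$ so that $c\cdot h$ has the same discriminant as the standard form $\mathrm{diag}(1,1,-1)$, hence is isometric to it. But scaling the form by an element of $\QQ_p^{\ast}$ does not change the unitary group: the equation $g g^{\iota_p}=1_3$ is unchanged when $M_h$ is replaced by $c M_h$. Therefore $\G(\QQ_p)=\su(h,K_{\pP})$ coincides with $\su(c\cdot h,K_{\pP})\cong \su_3(K_{\pP})$, where the last isomorphism is transport of structure via the isometry of forms.

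The one point that requires a little care is the ``odd-rank trick'': I should verify that the cube map $\QQ_p^{\ast}\to \QQ_p^{\ast}/\Nrd(K_{\pP}^{\ast})$ is surjective, which amounts to exhibiting an element of $\QQ_p^{\ast}$ that is not a norm from $K_{\pP}$ — e.g.\ a uniformizer of $\QQ_p$ in the unramified case, or a non-square unit in the ramified case. Apart from this routine local computation, the proof is essentially a bookkeeping exercise once the classification of hermitian forms over $K_{\pP}/\QQ_p$ is cited.
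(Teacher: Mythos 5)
Your proposal is correct and follows essentially the same route as the paper: Landherr's classification of hermitian forms over the local quadratic extension by the discriminant class in $\QQ_p^{\ast}/N_{K_{\pP}/\QQ_p}(K_{\pP}^{\ast})\cong\ZZ/2\ZZ$, combined with the observation that $\su(h)$ is unchanged under scaling $h$ by an element of $\QQ_p^{\ast}$ (the paper simply scales by $-1$ to reach a form of trivial discriminant, which is the special case of your ``odd-rank trick''). Your added care about why cubing is surjective modulo norms is a reasonable elaboration but does not change the argument.
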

\begin{proof}
By Landherr's Theorem, the isometry class of a hermitian form $h$ in $n$ variables associated with a quadratic extension $E/F$ of local fields is uniquely determined by its discriminant $\dd(h)=[\det(h)]\in F^{\ast}/N_{E/F}(E^{\ast})\cong \ZZ/2\ZZ$. On the other hand the isomorphism class of the associated unitary group $SU(h)$ only depends on $h$ up to multiplicative constant. Hence, the unitary group associated with $\textrm{diag}(1,1,-1)$ is isomorphic to the unitary group associated with $\textrm{diag}(-1,-1,1)$. Since the discriminant of the latter is $1$, the result follows.     
%It is well-known that there is only one isomorphism class of unitary groups $SU(V,H)$ where $V$ is an odd-dimensional vector space over a non-archimedean field $F$, see \cite{Tits}.
%Let us assume that $h$ is represented by the matrix 
%$$
%H=\left(\begin{array}{ccc}0 & 0 & -\sqrt{-d}\\0& 1& 0\\ \sqrt{-d}& 0&0\end{array}\right).
%$$
%The isomorphism is given by the fact that over a local field the hermitian form $h$ of signature $(2,1)$ represented by the diagonal matrix $diag(1,1,-p)$ and the standard hermitian form $I$, represented by the identity matrix $1_3$, are isometric. To see this, we note that the isometry class of a hermitian form $h$ over a local field depends only on the discriminant $\dd(h)=[\det(h)]\in \QQ_p/N_{K_{\pP}/\QQ_{p}}(K^{\ast}_{\pP})$. Our $h$ is represented by $diag(1,1,-p)$. Since $-p\in N_{K_{\pP}/\QQ_{p}}(K^{\ast}_{\pP})$ it follows that $\dd(h)=1=\dd(I)$. 
\end{proof}

Now we compute the local indices

\begin{lemma}
\label{locind1}
Let $\pP$ be a prime ideal of $K=\QQ(\sqrt{-d})$ such that $p=\pP\cap \ZZ$ is decomposed in $K$. Then
\[ [\G(\ZZ_p):\G(\ZZ_p)(\pP^n)]=p^{8n}(1-p^{-3})(1-p^{-2})\]
for any positive integer $n$.
\end{lemma}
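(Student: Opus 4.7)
The plan is to reduce the computation to the well-known order of $\Sl_3(\ZZ/p^n\ZZ)$ via the projection isomorphism already established in case i).

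First, I would make explicit what the ideal $\pP$ looks like in $\OO_K \otimes \ZZ_p$. Since $p = \pP\bar\pP$ splits, we have $\OO_K\otimes\ZZ_p \cong \ZZ_p \oplus \ZZ_p$, where the two summands are the completions at $\pP$ and $\bar\pP$. Under this identification, $\pP$ corresponds to $p\ZZ_p \oplus \ZZ_p$ and $\bar\pP$ to $\ZZ_p \oplus p\ZZ_p$, so $\pP^n$ becomes $p^n\ZZ_p \oplus \ZZ_p$. Consequently $\G(\ZZ_p)(\pP^n)$ consists of those pairs $(g,g^{-\iota_p})=(g,h) \in \Sl_3(\ZZ_p)\oplus\Sl_3(\ZZ_p)$ satisfying $g \equiv 1_3 \pmod{p^n}$ together with the vacuous condition $h\equiv 1_3\pmod{1}$.

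Next I would invoke the projection isomorphism $\G(\QQ_p)\cong\Sl_3(\QQ_p)$ from case i), which restricts to an isomorphism $\G(\ZZ_p)\cong \Sl_3(\ZZ_p)$. Under this isomorphism the previous description shows that $\G(\ZZ_p)(\pP^n)$ is carried exactly onto the classical principal congruence subgroup
\[
\Sl_3(\ZZ_p)(p^n) = \ker\bigl(\Sl_3(\ZZ_p)\longrightarrow \Sl_3(\ZZ/p^n\ZZ)\bigr).
\]
The reduction map being surjective (by Hensel lifting), the index equals $|\Sl_3(\ZZ/p^n\ZZ)|$.

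Finally I would compute this order in the standard way: the kernel of $\Sl_3(\ZZ/p^n\ZZ)\to\Sl_3(\FF_p)$ has cardinality $p^{8(n-1)}$ (the $3\times 3$ traceless-modulo-$p$ condition cuts one dimension from the $9$ parameters in $M_3$), while $|\Sl_3(\FF_p)| = p^3(p^3-1)(p^2-1)$. Multiplying gives
\[
|\Sl_3(\ZZ/p^n\ZZ)| = p^{8(n-1)}\cdot p^3(p^3-1)(p^2-1) = p^{8n}(1-p^{-3})(1-p^{-2}),
\]
which is the claimed formula. The only potentially subtle point is the identification of $\pP^n$ inside $\OO_K\otimes\ZZ_p$ and the verification that the congruence condition collapses to a single factor; once that is in place, the rest is a direct calculation of the order of $\Sl_3$ over $\ZZ/p^n\ZZ$.
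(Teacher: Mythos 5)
Your proof is correct and follows essentially the same route as the paper's: identify $\G(\ZZ_p)(\pP^n)$ with the classical congruence subgroup $\Sl_3(\ZZ_p)(p^n)$ via the projection isomorphism of the split case, use surjectivity of reduction to equate the index with $|\Sl_3(\ZZ/p^n\ZZ)|$, and count. The only (harmless) differences are that you spell out the identification of $\pP^n$ inside $\OO_K\otimes\ZZ_p$ more explicitly than the paper does, and you compute $|\Sl_3(\ZZ/p^n\ZZ)|$ via the kernel of reduction within $\Sl_3$ rather than as $|\Gl_3(\ZZ/p^n\ZZ)|/\varphi(p^n)$.
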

\begin{proof}
By our discussion above, in the decomposed case $\G(\QQ_p)=\Sl_3(\ZZ_p)$ and $\G(\ZZ_p)(\pP^n)=\Sl_3(\ZZ_p)(p^n)$. We note that the sequence 
\begin{equation}
 \label{seq}
1\longrightarrow SL_n(\ZZ_p)(p^n)\longrightarrow SL_n(\ZZ_p)\longrightarrow SL_n(\ZZ_p/p^n\ZZ_p)\longrightarrow 1
\end{equation}
is exact. In fact, (\ref{seq}) is exact even if we replace $\ZZ_p$ and $p^n$ by any commutative ring $R$ and ideal $\mathfrak I$ such that $R/\mathfrak I$ is finite (see \cite{bass}, Cor.~5.2). Therefore 
\[[\G(\ZZ_p):\G(\ZZ_p)(\pP^n)]=|SL_3(\ZZ_p/p^n\ZZ_p)|=|SL_3(\ZZ/p^n\ZZ)|.\] 
The latter number can be computed in an elementary way:\\
Denote $\rho: \Gl_3(\ZZ/p^n\ZZ)\longrightarrow \Gl_3(\ZZ/p\ZZ)$ the canonical projection map which sends every residue $\bmod\ p^n$ to its residue $\bmod\ p$. The kernel $\ker(\rho)$ consists of $p^{9(n-1)}$ elements. With the well-known number $|\Gl_3(\ZZ/p\ZZ)|=p^9(1-p^{-3})(1-p^{-2})(1-p^{-1})$, we get $|\Gl_3(\ZZ/p^n\ZZ)|=p^{9n}(1-p^{-3})(1-p^{-2})(1-p^{-1})$. Dividing this number by $\varphi(p^n)$ which is the order of the multiplicative group $(\ZZ/p^n\ZZ)^{\ast}$ we get the desired result.
\end{proof}
For some technical reasons we exclude from now on the prime 2 from our consideration.

\begin{lemma}
\label{locind2}
Let $\pP$ be a prime ideal in $\OO_K$ and $p=\pP\cap \ZZ\neq 2$ non-decomposed. For every $n \geq 1$ the following holds:
\begin{enumerate}
\item If $p$ is inert,
\[[\G(\ZZ_p):\G(\ZZ_p)(\pP^n)]=p^{8n}(1+p^{-3})(1-p^{-2}).\]
\item If $p$ is ramified, 
\[[\G(\ZZ_p):\G(\ZZ_p)(\pP^n)]=p^{\epsilon_n}(1-p^{-2}),\]
where $\epsilon_n$ is defined as 
\[\epsilon_n=\left\{\begin{array}{cc} 4n & \textrm{if}\ n\equiv 0\bmod 2\\ 4n-1 & \textrm{if}\ n\equiv 1\bmod 2 \end{array} \right .\]
\end{enumerate}

\end{lemma}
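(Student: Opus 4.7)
The plan is to mirror the strategy of Lemma \ref{locind1}: set $K_i:=\G(\ZZ_p)(\pP^i)$ and write the index as the telescoping product
\[
[\G(\ZZ_p):\G(\ZZ_p)(\pP^n)]=\prod_{i=0}^{n-1}|K_i/K_{i+1}|.
\]
For $i\geq 1$ the map $1+\pi^iY\mapsto Y\bmod\pP$ injects $K_i/K_{i+1}$ into the $\mathbb F_p$-subspace of $M_3(\OO_{K_\pP}/\pP)$ cut out by the linearized unitary equation $\pi^iY+\bar\pi^iM_h\bar Y^tM_h^{-1}\equiv 0\pmod{\pP^{i+1}}$ together with $\operatorname{tr}(Y)\equiv 0\pmod\pP$; the quadratic remainder $\pi^i\bar\pi^iYM_h\bar Y^tM_h^{-1}$ sits in $\pP^{2i}\subset\pP^{i+1}$ and therefore does not contribute. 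Surjectivity onto this linear subspace would follow from a standard Hensel/Newton argument after $h$ is normalized via Lemma \ref{localisom} to have integral matrix of trivial discriminant. The ``reductive'' base factor $|K_0/K_1|$ I would handle separately.

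In the \emph{inert case} $\pi=p$ is Galois-fixed, so cancelling $\pi^i$ reduces the linearized equation to $Y+M_h\bar Y^tM_h^{-1}\equiv 0$ over $\OO_{K_\pP}/\pP\cong\mathbb F_{p^2}$; together with the trace-zero condition these are precisely the defining relations of $\mathfrak{su}_3(\mathbb F_{p^2})$, an $\mathbb F_p$-vector space of dimension $8$. Hence $|K_i/K_{i+1}|=p^8$ for every $i\geq 1$. The base factor $|K_0/K_1|=|\su_3(\mathbb F_{p^2})|$ equals the classical order $p^3(p^2-1)(p^3+1)=p^8(1-p^{-2})(1+p^{-3})$, derived as in Lemma \ref{locind1} by a direct count from $|\Gl_3(\mathbb F_{p^2})|$. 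Multiplying all factors yields the claimed $p^{8n}(1-p^{-2})(1+p^{-3})$.

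The \emph{ramified case} is the main obstacle and requires a careful choice of uniformizer. Since $p$ is odd one can pick $\pi\in\OO_{K_\pP}$ with $\bar\pi=-\pi$, so that $\bar\pi^i=(-1)^i\pi^i$ and residue conjugation on $\OO_{K_\pP}/\pP\cong\mathbb F_p$ is trivial. Cancelling $\pi^i$ in the linearized unitary equation then leaves $Y+(-1)^iM_hY^tM_h^{-1}\equiv 0\pmod\pP$. A direct basis count for the involution $\sigma(Y):=M_hY^tM_h^{-1}$ on $M_3(\mathbb F_p)$ shows that its $(+1)$-eigenspace is $6$-dimensional (dropping to $5$ after trace-zero) while its $(-1)$-eigenspace is $3$-dimensional (automatically traceless in characteristic $\neq 2$). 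Hence $|K_i/K_{i+1}|=p^5$ for odd $i\geq 1$ and $|K_i/K_{i+1}|=p^3$ for even $i\geq 2$. For the base factor, the same residual triviality collapses the hermitian condition mod $\pP$ to the orthogonal one $gM_hg^t\equiv M_h$, giving $K_0/K_1\cong\mathrm{SO}(h_0)(\mathbb F_p)$ where $h_0$ is the quadratic reduction of $h$; in odd dimension $3$ this has order $p(p^2-1)=p^3(1-p^{-2})$ independently of the Witt type. Summing the exponent contributions $3+5\lceil(n-1)/2\rceil+3\lfloor(n-1)/2\rfloor$ gives $4n$ for even $n$ and $4n-1$ for odd $n$: exactly $\epsilon_n$.
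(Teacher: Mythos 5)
Your computation is correct in substance --- every count checks out and the exponent bookkeeping reproduces $\epsilon_n$ --- but you take a genuinely different route from the paper. The paper does not analyse the full congruence filtration: it quotes Holzapfel's order formula (\cite[Cor.~5A.1.3]{Ho:bsa}) outright for the inert case and for the ramified case at \emph{even} levels, and then obtains the odd ramified level $n$ by computing the single graded piece $\G(\ZZ_p)(\pP^n)/\G(\ZZ_p)(\pP^{n+1})$ --- precisely your odd-layer count of symmetric trace-zero matrices, $p^5$ of them --- and dividing it out of the known index at level $n+1$. You instead build everything from the bottom up: you identify $K_0/K_1$ with $\su_3(\mathbb{F}_{p^2}/\mathbb{F}_p)$ resp.\ $\mathrm{SO}_3(\mathbb{F}_p)$ and compute every layer, so that the alternation $p^5,p^3,p^5,\dots$ between the $(+1)$- and $(-1)$-eigenspaces of $Y\mapsto M_hY^tM_h^{-1}$ actually \emph{explains} the parity dependence of $\epsilon_n$ instead of merely producing it. That buys self-containedness and transparency (no black-box citation for the even case); the price is that you owe several surjectivity statements that the paper simply imports.

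Those surjectivity claims are the one real gap. Injectivity of $1+\pi^iY\mapsto Y\bmod\pP$ gives only an upper bound $|K_i/K_{i+1}|\le p^{d_i}$; to conclude equality you must lift an arbitrary solution of the linearized equations to a genuine element of $\su(h,\OO_{\pP})$, and likewise you must show that $K_0$ surjects onto all of $\mathrm{SU}_3(\mathbb{F}_p)$ resp.\ $\mathrm{SO}_3(\mathbb{F}_p)$. This is not an off-the-shelf Hensel application in the ramified case, because the naive integral model of $\su(h)$ is not smooth there --- the very irregularity of your layer sizes is the symptom --- so the lifting has to be done by hand: given $g_0$ with $g_0g_0^{\iota}\equiv 1\bmod \pP^{i+1}$, one corrects it using the solvability of $Z+Z^{\iota}=E$ for hermitian $E$ by $Z=E/2$ (or a hermitian square root), and then fixes the determinant by a norm-one scalar; this is exactly where $p\neq 2$ enters and it deserves to be written out. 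A smaller quibble: $|\mathrm{SU}_3(\mathbb{F}_p)|=p^3(p^2-1)(p^3+1)$ is not obtained from $|\Gl_3(\mathbb{F}_{p^2})|$ by the same quotient trick as in Lemma~\ref{locind1}; it needs the standard isotropic-vector count for unitary groups. None of this is fatal --- the facts are classical and the paper itself outsources them to \cite{Ho:bsa} --- but as written your argument asserts rather than proves the steps that carry the actual content in the ramified case.
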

\begin{proof}
First, we identify the asked index with the order of the finite group $\su(\bar h,\OO_{\pP}/\pP^{n})$, where $\OO_{\pP}=\OO_{K_{\pP}}$ denotes the ring of integers in $K_{\pP}$ and $\bar h$ the restriction of $h$ to $\OO_{\pP}/\pP^n$. To do so, we have to prove the surjectivity of the canonical reduction map $\G(\ZZ_p)=\su(h,\OO_{\pP})\longrightarrow \su(h,\OO_{\pP}/\pP^n)$. But this follows from the identity 
\[\G(\ZZ_p)/\G(\ZZ_p)(p^n)\cong \su(\bar h,\OO_{\pP}/p^n),\] see \cite[Prop.~5A.2.7]{Ho:bsa}, and the surjectivity of the reduction $\su(\bar h,\OO_{\pP}/p^{n})\longrightarrow \su(\bar h,\OO_{\pP}/\pP^{n})$.\\ 
Moreover, we can extract the above local index formulas again from \cite[Cor.~5A.1.3]{Ho:bsa} for the case in which $p$ is inert, or $p$ is ramified and $n$ is an even positive integer. According to this, the only non trivial case is the situation where $p$ is ramified and $n$ is odd. Let us from now on assume this situation, that is, $p$ is ramified and $n$ is odd.\\By \cite[Cor.~5A.1.3]{Ho:bsa} again we know that \[ \G(\ZZ_p)/\G(\ZZ_p)({\pP}^{n+1})=|\G(\ZZ_p)/\G(\ZZ_p)(p^{(n+1)/2})|=p^{4(n+1)}(1-p^{-2}).\] 
By the multiplicativity of index we obtain 
\begin{equation}
\label{pnpn+1}
\G(\ZZ_p)/\G(\ZZ_p)({\pP}^n)=p^{4(n+1)}(1-p^{-2})/[\G(\ZZ_p)({\pP}^{n})/\G(\ZZ_p)({\pP}^{n+1})].
\end{equation}
In order to compute the index in the denominator, we first recall the isomorphism $\G(\QQ_p)\cong \su_3(K_{\pP})$ from Lemma \ref{localisom}. Let $\pi$ be a generator of the valuation ideal $\pP$ of $\OO_{\pP}$. As $p\neq 2$, the extension $K_{\pP}/\QQ_p$ is tamely ramified and we can choose $\pi$ such that $\bar \pi=-\pi$, where $\bar \cdot$ denotes the conjugation in $K_{\pP}$ (compare \cite[Prop.~II.5.12]{Lan} ).\\
As a matrix in $M_3(K_{\pP})$, every $g\in\G(\ZZ_p)$ has a $\pP$-adic representation:
\[g=g_0+\pi g_1+\pi^2 g_2+\pi^3 g_3+\ldots,\]
with some matrices $g_j\in M_3(\OO_{\pP}/\pP)=M_3(\ZZ/p\ZZ)$, where the latter equation holds since $p$ is ramified. We note that a matrix $g\in \G(\ZZ_p)$ lies in $\G(\ZZ_p)(\pP^{n})$ if and only if its $\pP$-adic representation starts with
\[g=1_3+\pi^{n} g_n+\pi^{n+1} g_{n+1}+\ldots.\]
For this reason, the application $g\mapsto g_{n}$ gives a map 
\[\G(\ZZ_p)/\G(\ZZ_p)(\pP^n)\longrightarrow M_3(\ZZ/p\ZZ).\] 
We further note that, as $p$ is ramified, the conjugation acts as the identity on $\OO_{\pP}/\pP^{n}$. Now let us consider a representative $g\in \G(\ZZ_p)(\pP^n)$ of the class $[g]\in \G(\ZZ_p)(\pP^n)/\G(\ZZ_p)(\pP^{n+1})$. Two equations characterize $g$ as an element of $\su_3(K_{\pP})$: $g$ is hermitian, that is, $g\bar g^t=(1_3+\pi^n g_n+\ldots)(1_3+\bar \pi^n \bar g_n+\ldots)^t=1_3$ or equivalently $(1_3+\pi^n g_n+\ldots)(1_3- \pi^n \bar g^t_n+\ldots)=1_3$. In other words $g_n-g_n^t=0$, that is, $g_n$ is symmetric. On the other hand, as $\det(g)=\det(1_3+\pi^n g_n+\ldots)=1\equiv 1+\pi^n Tr(g_n)\bmod \pi^{n+1}$, $g_n$ has trace $0$.  By this, the factor group $\G(\ZZ_p)(\pP^n)/\G(\ZZ_p)(\pP^{n+1})$ is characterized as the set of symmetric matrices in $M_3(\ZZ/p\ZZ)$ with trace equal to 0. There are exactly $p^5$ such matrices, and the equation (\ref{pnpn+1}) gives for $n$ odd and $p$ ramified 
\[\G(\ZZ_p)/\G(\ZZ_p)({\pP}^n)=p^{4n-1}(1-p^{-2}),\]
from which the final formula follows.
\end{proof}
Now, summarizing results from Lemma \ref{locind1} and Lemma \ref{locind2}, we get
\begin{theorem}
\label{ind}
Let $\aaa$ be an integral ideal in $K=\QQ(\sqrt{-d})$ and let $\aaa=\pP_1^{e_1}\cdots\pP_t^{e_t}$ be the prime ideal decomposition of $\aaa$ with $p_i=\pP_i\cap\ZZ$. Assume that $p_i\neq 2$ for all $i=1,\ldots t$, if 2 is non-decomposed in $K$. Then it holds 
\[[\Gamma_K:\Gamma_K(\aaa)]=\prod_{i=1}^tp^{\epsilon_i}(1-p_i^{-2})(1-\chi_D(p_i)p_i^{-3}),\]
where $D$ is the discriminant of $K$, $\chi_D(\;)$ is the Jacobi symbol $\chi_D(p)=\left(\frac{D}{p}\right)$ and $\epsilon_i$ is defined by 
\[
 \epsilon_i=\left\{ 
\begin{array}{cl} 
8e_i, & \textrm{if}\ \chi_D(p_i)\neq 0\\
4e_i, & \textrm{if}\ \chi_D(p_i)=0\ \textrm{and}\ e_i\equiv 0\bmod 2\\
4e_i-1, & \textrm{if}\ \chi_D(p_i)=0\ \textrm{and}\ e_i\equiv 1\bmod 2\\
\end{array}\right .
\]
 \end{theorem}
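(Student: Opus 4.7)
The plan is to combine the strong-approximation product decomposition of Lemma \ref{strongappr} with the three local index formulas already established in Lemmas \ref{locind1} and \ref{locind2}, and then repackage the result into a uniform expression by exploiting the fact that the splitting type of $p$ in $K$ is detected by the Kronecker symbol $\chi_D(p)$. The theorem is thus essentially a bookkeeping statement assembling the previous local computations; the nontrivial mathematics has already been done.

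First I would invoke Lemma \ref{strongappr} to write
\[[\Gamma_K:\Gamma_K(\aaa)] = \prod_{i=1}^t [\G(\ZZ_{p_i}):\G(\ZZ_{p_i})(\pP_i^{e_i})],\]
so that it suffices to identify each local factor with $p_i^{\epsilon_i}(1-p_i^{-2})(1-\chi_D(p_i)p_i^{-3})$ for the appropriate $\epsilon_i$.

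Next I would proceed by cases on the decomposition of $p_i$ in $K$, recalling that for the imaginary quadratic field $K$ with discriminant $D$ the value $\chi_D(p)$ equals $1$, $-1$, or $0$ according as $p$ is split, inert, or ramified. If $p_i$ is split, Lemma \ref{locind1} yields $p_i^{8e_i}(1-p_i^{-3})(1-p_i^{-2})$, matching $\epsilon_i = 8e_i$ and $1-\chi_D(p_i)p_i^{-3} = 1-p_i^{-3}$. If $p_i$ is inert, Lemma \ref{locind2}(1) gives $p_i^{8e_i}(1+p_i^{-3})(1-p_i^{-2})$, and since $\chi_D(p_i)=-1$ the factor $1+p_i^{-3}$ coincides with $1-\chi_D(p_i)p_i^{-3}$, while again $\epsilon_i=8e_i$. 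If $p_i$ is ramified, Lemma \ref{locind2}(2) gives $p_i^{\epsilon_i}(1-p_i^{-2})$ with $\epsilon_i$ exactly of the form prescribed in the theorem, and $\chi_D(p_i)=0$ makes $1-\chi_D(p_i)p_i^{-3}=1$, so both formulations agree.

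Finally I would observe that the hypothesis ``$p_i \ne 2$ whenever $2$ is non-decomposed in $K$'' is inherited verbatim from the running assumption of Lemma \ref{locind2}, which is needed there to guarantee tame ramification and the choice of a uniformizer $\pi$ with $\bar\pi=-\pi$. Multiplying the local factors over $i$ then produces the claimed formula. The hard part of this theorem is not the present assembly but rather the ramified odd-exponent local index in Lemma \ref{locind2}, which has already been handled; no real obstacle remains, beyond verifying that the Kronecker-symbol repackaging produces no sign or exponent mismatch in any of the three cases.
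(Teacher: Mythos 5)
Your proposal is correct and coincides with the paper's own (implicit) argument: the theorem is stated there precisely as the summary of Lemma \ref{strongappr} together with the local index formulas of Lemmas \ref{locind1} and \ref{locind2}, with the three splitting types repackaged via the values $\chi_D(p_i)\in\{1,-1,0\}$ exactly as you describe. Your identification of the role of the hypothesis on the prime $2$ also matches the paper.
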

We remark that this result generalizes \cite[Theorem~5A.2.14]{Ho:bsa}. 

\subsection{Determination of neat subgroups}

In this section we discuss the question, for which integral ideals $\aaa$ the principal congruence subgroup $\Gamma_K(\aaa)$ is a neat subgroup. This rather technical property will be used in our consideration of Picard modular surfaces.\\
\begin{lemma}
\label{neat}
Assume that $\aaa$ is an ideal in $\OO_K$ such that $\aaa\cap \ZZ$ and 2 are coprime. Then, the principal congruence subgroup $\Gamma_{K}(\aaa)$ is neat if $N(\aaa)> 3$.
\end{lemma}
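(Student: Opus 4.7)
The plan is to show that the multiplicative subgroup of $\CC^{\ast}$ generated by the eigenvalues of elements of $\Gamma_K(\aaa)$ is torsion-free when $N(\aaa)>3$. The strategy combines a congruence coming from $\gamma\equiv 1_3\pmod{\aaa}$ with the classical arithmetic of cyclotomic integers.

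To begin, for any $\gamma\in\Gamma_K(\aaa)$ the characteristic polynomial satisfies $\chi_\gamma(x)\equiv(x-1)^3\pmod{\aaa}$, so every eigenvalue $\lambda$ lies in a number field $L\supset K$ and fulfils $(\lambda-1)^3\in\aaa\OO_L$. A routine binomial-expansion argument in the local completions of $\OO_L$ then propagates the cubic congruence to products: if $\zeta=\prod_i\lambda_i^{n_i}$ is any finite product of such eigenvalues, taken inside a common extension $L$, then at every prime $\qQ$ of $\OO_L$ one obtains the valuation inequality $3\,v_{\qQ}(\zeta-1)\ge v_{\qQ}(\aaa\OO_L)$. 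Now assume that this $\zeta$ is a primitive $m$th root of unity with $m\ge 2$. The classical identity $N_{\QQ(\zeta)/\QQ}(1-\zeta)=\Phi_m(1)$ shows that $1-\zeta$ is a unit in $\OO_{\QQ(\zeta)}$ unless $m=p^k$ is a prime power, in which case $(1-\zeta)\OO_{\QQ(\zeta)}$ is the unique prime above $p$ with ramification index $\varphi(p^k)$. This forces $\aaa$ to be supported on a single prime $\pP\subset\OO_K$ above $p$, say $\aaa=\pP^a$; passing to the normalised $p$-adic valuation $v$ with $v(p)=1$ and using $e(\pP/p)\le[K:\QQ]=2$ yields the numerical inequality
\[
  a\,\varphi(p^k)\;\le\;3\,e(\pP/p)\;\le\;6.
\]

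The coprimality hypothesis that $\aaa\cap\ZZ$ is coprime to $2$ rules out $p=2$, so $p\in\{3,5,7\}$, and the pairs $(p,k,a)$ together with the three possible splitting behaviours of $p$ in $K$ form a short explicit list. In each surviving case I would compute $N(\aaa)=p^{a\,f(\pP/p)}$ and check that $N(\aaa)>3$ forces a contradiction. The main obstacle is precisely this final tabulation: the cubic congruence is not by itself sharp enough to eliminate every entry outright (for example $p=3$ inert with $a=k=1$ gives $N(\aaa)=9$ while still formally satisfying $a\varphi(p^k)\le 6$). Closing such borderline cases should exploit the additional symmetry imposed by $\gamma\in\su(h,\OO_K)$ -- namely that the three eigenvalues are stable under $\lambda\mapsto\bar\lambda^{-1}$ and have product $1$ -- together with a finer Newton-polygon analysis of $\chi_\gamma(x)-(x-1)^3$ obtained from the explicit shape $\gamma=1_3+A$ with $A\in M_3(\aaa)$, which should sharpen the naive exponent $3$ in the bound.
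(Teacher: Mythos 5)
There is a genuine gap, and it is exactly the one you flag yourself: the exponent $3$ in your key divisibility is too lossy to finish the proof. From $\chi_\gamma(x)\equiv(x-1)^3\pmod{\aaa}$ you only get $v_{\qQ}(\zeta-1)\ge v_{\qQ}(\aaa\OO_L)/3$, and the resulting bound $a\,\varphi(p^k)\le 3\,e(\pP/p)\le 6$ leaves alive not just your borderline $p=3$ inert case but also, e.g., $\aaa=\pP$ with $p=7$ ramified ($N(\aaa)=7$) and $p=5$ in various splitting types --- all with $N(\aaa)>3$. Your proposed repair (``exploit the $\su(h)$ symmetry of the eigenvalues plus a finer Newton-polygon analysis'') is left entirely unexecuted, so as written the proof does not close. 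The paper avoids the problem by using a strictly stronger divisibility, namely $\aaa\OO_L\mid(\zeta-1)\OO_L$ with exponent $1$: this comes not from the characteristic polynomial but from the eigenvector equation --- if $\gamma v=\zeta v$ with $v$ normalized so that its entries generate the unit ideal locally, then $(\zeta-1)v=(\gamma-1)v\in\aaa\OO_L^3$ forces $\zeta-1\in\aaa\OO_L$. Taking norms then gives $N(\aaa)^{\varphi(p)/2}\mid p$, which fails for every odd prime $p$ once $N(\aaa)>3$, with no case analysis of splitting types needed.

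It is worth noting that the Newton-polygon idea you gesture at actually does work, and without invoking the unitary symmetry: writing $\gamma=1_3+A$ with $A\in M_3(\aaa)$, one has $\chi_\gamma(x)=\det((x-1)1_3-A)=(x-1)^3-\mathrm{tr}(A)(x-1)^2+\sigma_2(A)(x-1)-\det(A)$, and the coefficient of $(x-1)^{3-j}$ lies in $\aaa^j$, not merely in $\aaa$. Comparing valuations in $3t\ge\min(s+2t,\,2s+t,\,3s)$ with $t=v_{\qQ}(\zeta-1)$ and $s=v_{\qQ}(\aaa\OO_L)$ immediately forces $t\ge s$, recovering the first-power bound. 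Either route would complete your argument; one other small point is that your claim that $\aaa$ must be supported on a \emph{single} prime above $p$ is unjustified when $p$ splits (though harmless, since the valuation inequality applies prime by prime). Your direct treatment of products of eigenvalues via valuations is a nice touch that makes the reduction to a single root of unity more self-contained than the paper's citation of Holzapfel, but it cannot compensate for the weak exponent.
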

\begin{proof}(compare \cite[Lemma 4.3]{Ho:min})\\
Assume that $\Gamma_K(\aaa)$ is not neat. Then, there are eigenvalues $\zeta$ of elements $\gamma\in \Gamma_K(\aaa)$ which generate a torsion subgroup in $\CC^{\ast}$. One concludes that such eigenvalues must be roots of unity (\cite[proof of Lemma 4.3]{Ho:min}). Now, on the one hand, such an eigenvalue $\zeta$ is a root of unity, $n$-th root of unity say, and on the other hand, $\zeta$ is a zero of the characteristic polynomial of $\gamma$ which has degree 3 over $K$. Therefore $[\QQ(\zeta):\QQ]=\varphi(n)\leq 6$. Without loss of generality we can assume that $n=p$ is a prime number. So, in fact we have to check that for $n=2,3,5,7$ the primitive $n$-th root of unity does not appear as the the eigenvalue of elements in $\Gamma_K(\aaa)$ for stated ideals $\aaa$. For this, we first observe that there is a relation between the eigenvalues $\zeta$ of elements in $\Gamma_K(\aaa)$ and the ideal $\aaa$, namely
\[\aaa\ZZ[\zeta]|(\zeta-1)\ZZ[\zeta]\]
Taking norms on both sides we obtain
\[N(\aaa\ZZ[\zeta])=N(\aaa)^{\varphi(n)/2}|N_{\QQ(\zeta)/\QQ}(\zeta-1)=n.\] 
It is easely checked that none of the above primes satisfies this relation by given $\aaa$ with $(2,\aaa\cap\ZZ)=1$ and $N(\aaa)>3$. 
\end{proof}

\section{Proportionality principle for compactified Picard modular surfaces}

As a non-compact almost simple Lie group, $\G(\RR)=\su(h,\CC)$ has a naturally associated irreducible simply connected hermitian symmetric domain $\BB=\su(h)/C$, with a maximal compact subgroup $C$. Taking $h$ to be represented by $\textrm{diag}(1,1,-1)$ this symmetric domain is exactly the 2-dimansional complex unit ball
\[\BB=\BB_2=\{(z_1,z_2)||z_1|^2+|z_2|^2<1\}.\]
It is more natural to consider $\BB$ embedded in $\PP_2(\CC)$ which is interpreted as the compact dual symmetric space of $\BB$. From the construction of $\BB$ it is clear that the group $\su(h)$ acts on $\BB$ as a group of biholomorphic transformations. According to the theorem of Borel and Harish-Chandra (see \cite{BH:1}), the group $\Gamma_K$, and consequently every group $\Gamma$ commensurable to $\Gamma_K$ is a discrete subgroup of finite covolume in the Lie group $\G(\RR)=\su(h,\CC)$ corresponding to $\G$. Therefore, the action of each $\Gamma$ is properly discontinuous, and it is reasonable to consider the quotient space $Y(\Gamma):=\Gamma\backslash \BB$. Now we recall some basic properties of $Y(\Gamma)$

\begin{theorem}
\label{comp}
Let $\Gamma\subset \Gamma_K$ be a Picard modular group and $Y(\Gamma)=\Gamma\backslash \BB$ the corresponding locally symmetric space. Then the following hold:
\begin{enumerate}
\item $Y(\Gamma)$ is not compact.
\item There exists a compactification $\overline{Y(\Gamma)}$ of $Y(\Gamma)$, which set theoreticaly is $Y(\Gamma)\cup\{\textrm{finitely\ many\ points}\}$ and has a structure of a normal projective variety. $\overline{Y(\Gamma)}$ is called the Baily-Borel compactification. 
\item There exists a rational map $\widetilde{Y(\Gamma)}\longrightarrow \overline{Y(\Gamma)}$ such that $\widetilde{Y(\Gamma)}$ is smooth. $\widetilde{Y(\Gamma)}$ is called the smooth compactification of $Y(\Gamma)$.
\end{enumerate}
\end{theorem}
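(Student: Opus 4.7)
The plan is to prove each of the three assertions essentially as an application of standard results from the theory of arithmetic quotients of bounded symmetric domains, after first verifying the relevant hypotheses for $\G=\su(h)$ with $h$ of signature $(2,1)$ over $K$.

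For part (1), the strategy is to show that the $\QQ$-rank of $\G$ equals $1$ and then invoke the Borel--Harish-Chandra compactness criterion, which asserts that $\Gamma\backslash \G(\RR)$ (hence $Y(\Gamma)$) is compact if and only if $\G$ has no non-trivial $\QQ$-parabolic subgroups, equivalently, if and only if $h$ is anisotropic over $K$. Since $h$ has signature $(2,1)$, the vector $(1,0,1)$ is isotropic for the standard form $\textrm{diag}(1,1,-1)$, and by Landherr's theorem every hermitian form in the same class admits a $K$-isotropic vector. Thus $\G$ carries a proper $\QQ$-parabolic stabilizing a $K$-isotropic line, producing unipotent elements in $\Gamma$ and hence cusps in $Y(\Gamma)$. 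This yields (1).

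For part (2), the plan is to apply the Baily--Borel theorem to the arithmetic quotient $\Gamma\backslash\BB$, using that $\BB$ is realized as a bounded symmetric domain inside $\PP_2(\CC)$ and that $\Gamma$ has finite covolume in $\G(\RR)$. The rational boundary components are the $K$-rational isotropic lines in $V$, which for the two-dimensional ball are zero-dimensional, so each yields a single cusp point. Finiteness of the cusp set for $\Gamma$ follows from the finiteness of $\Gamma_K$-orbits on such isotropic lines (reduction theory for the rank-one parabolic structure), together with $[\Gamma_K:\Gamma]<\infty$. Adjoining these finitely many points and invoking the normality and projectivity statement of Baily--Borel gives $\overline{Y(\Gamma)}$.

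For part (3), the plan is to resolve the isolated (possibly singular) cusp points of $\overline{Y(\Gamma)}$. One option is the general toroidal compactification of Ash--Mumford--Rapoport--Tai, applied to each cusp using the unipotent radical of the corresponding parabolic; for a neat subgroup (obtainable via Lemma \ref{neat} by passing to a deep enough principal congruence subgroup) this produces a smooth projective variety, from which the general case descends by taking quotients and resolving. Alternatively, for $\BB_2$ one can use the direct ball-cusp resolution that replaces each cusp by an elliptic curve, as developed in the Picard modular setting. Either route yields the required rational map $\widetilde{Y(\Gamma)}\longrightarrow \overline{Y(\Gamma)}$ with $\widetilde{Y(\Gamma)}$ smooth.

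The proof is essentially a reduction to three well-known results (Borel--Harish-Chandra, Baily--Borel, and toroidal resolution of cusps), so the substantive work is limited to verifying the hypotheses in each case. The main obstacle, if any, is part (3), where one must ensure that the local resolution at each cusp glues to a global smooth projective variety; this is handled either by invoking the AMRT machinery on a neat cover and descending, or by explicit analysis of the elliptic-curve resolution of ball cusps available in the Picard modular literature.
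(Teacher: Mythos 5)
Your proposal follows essentially the same route as the paper: non-compactness via the Borel--Harish-Chandra criterion applied to the $K$-isotropic vectors of $h$ (equivalently, the unipotent elements in rational parabolics), the Baily--Borel theorem for the cusp compactification, and the toroidal/elliptic-curve resolution of cusps (on a neat cover, with descent and resolution of quotient singularities otherwise) for the smooth model. The argument is correct, with only the minor caveat that the $K$-isotropy of a general signature-$(2,1)$ form is best justified by the local-global principle for hermitian forms (isotropy in three variables at all finite places plus indefiniteness at infinity) rather than by appeal to a single diagonal representative.
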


The proof of the above statements, which are in fact special cases of theorems, being valid in much greater generality, can be found in \cite{Ho:bsa}. Let us roughly sketch the main steps of the proofs, also in order to introduce some notions which will be used later.
The non-compactness follows from \cite[11.~8.]{BH:1}. By this theorem $Y(\Gamma)$ is compact if and only if there are no non-trivial unipotent elements in $\G(\QQ)$. But there are many unipotent elements and they all lie in the unipotent radicals of minimal rational parabolic subgroups of $\G$, which themselves fix maximal $h$-isotropic subspaces of $V=K^3$. The compactification $\overline{Y(\Gamma)}$ of $Y(\Gamma)$ is obtained by taking quotient $\overline{Y(\Gamma)}=\Gamma\backslash \BB^{\ast}$, where $\BB^{\ast}=\BB\cup \partial_{\QQ}\BB$ with $\partial_{\QQ}\BB= \G(\QQ)/\Pp$ with some fixed rational parabolic subgroup $\Pp\subset \G$. By a suitable choice of $\Pp$ one can assume that $\partial_{\QQ}\BB=\{[v]\in \PP_2(K)\mid h(v,v)=0\}$. The set $\partial_{\Gamma}\BB=\Gamma\backslash \G(\QQ)/\Pp$ is finite, so $\overline{Y(\Gamma)}=Y(\Gamma)\cup\{\textrm{finitely\ many\ points}\}$. By the theorem of Baily-Borel $\overline{Y(\Gamma)}$ is a normal projective variety. The singularities come from non-trivial torsion elements in $\Gamma$ and possibly from {\bf cusps}, that is, points of $\overline{Y(\Gamma)}-Y(\Gamma)$. If we assume that $\Gamma$ is neat, the theory of toroidal compactifications (see \cite[chapter 4]{Ho:bsa} for the special case of arithmetic lattices in $\su(2,1)$) provides a smooth compactification by replacing every cusp by a smooth elliptic curve. Note that by a theorem of Borel neat normal subgroups $\Gamma^{'}\triangleleft \Gamma_K$ of finite index exist. If $\Gamma$ is not neat, the toroidal compactification provides a surface with at most quotient singularities. They can be resolved by known methods. This resolution gives a map $\widetilde{Y(\Gamma)}\rightarrow \overline{Y(\Gamma)}$, with smooth $\widetilde{Y(\Gamma)}$. 

\subsection{Chern numbers of compactified Picard modular surfaces} 
The two Chern numbers $c_2$ and $c_1^2$ of a smooth compact complex surface are important numerical invariants. The famous proportionality theorem of F.~Hirzebruch states that the Chern numbers of a (2-dimensional) ball quotient by a torsion free cocompact discrete subgroup satisfy $c_1^2=3c_2$. D.~Mumford generalized Hirzebruch's theorem also to non-compact quotients. Before we state the proportionality result for Picard modular groups, as described in \cite{Ho:bsa}, we need to introduce some notions.\\
Let $\Gamma$ be a neat Picard modular group. The quotient $Y(\Gamma)$ is not compact, but one can still define Chern numbers of $Y(\Gamma)$ as the volumes
\begin{align*}
c_2(Y(\Gamma))&=\int_{F(\Gamma)}\eta_2\\
c_1^2(Y(\Gamma))&=\int_{F(\Gamma)}\eta_1^2
\end{align*}
where $F(\Gamma)$ is a fundamental domain of $\Gamma$ in $\BB$ and $\eta_2$ and $\eta_1^2$ are suitably normalized volume forms related to the $\Gamma$-invariant Bergman metric on $\BB$.\\
On the other hand, on the smooth compactification $\widetilde{Y(\Gamma)}$ of $Y(\Gamma)$ there are also usual Chern numbers. Before we relate these two kind of Chern numbers, we note that $\widetilde{Y(\Gamma)}$ arises from $Y(\Gamma)$ by replacing a cusp $\kappa\in \partial_{\Gamma}\BB$ by an elliptic curve $E_{\kappa}$. Hence, the difference between $\widetilde{Y(\Gamma)}$ and $Y(\Gamma)$ is encoded in the {\bf compactification divisor} $T_{\Gamma}=\sum_{\kappa\in \partial_{\Gamma}\BB} E_{\kappa}$. 
\begin{proposition}(\cite[Proposition 4.3.6]{Ho:bsa})
\label{invpic}
Let $\Gamma$ be a neat Picard modular group, $Y(\Gamma)=\Gamma\backslash \BB$, $\widetilde{Y(\Gamma)}$ the smooth compactification of $Y(\Gamma)$ and $T_{\Gamma}$ the compactification divisor. Let $(T_{\Gamma}.T_{\Gamma})$ denote the selfintersection number of $T_{\Gamma}$. Then:
\begin{enumerate}
\item $c_2(\widetilde{Y(\Gamma)})=c_2(Y(\Gamma))$
\item $c_1^2(\widetilde{Y(\Gamma)})=3 c_2(Y(\Gamma))+(T_{\Gamma}.T_{\Gamma})$
\end{enumerate}

\end{proposition}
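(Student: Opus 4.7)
The plan is to establish both formulas by combining Mumford's extension of the Hirzebruch proportionality principle to toroidal compactifications of ball quotients with two elementary calculations on $\widetilde{Y(\Gamma)}$: a Chern-class identity derived from the Poincar\'e residue sequence, and an adjunction argument on the elliptic boundary components. Throughout, write $K$ for the canonical divisor of $\widetilde{Y(\Gamma)}$ (not to be confused with the imaginary quadratic field).

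First I would invoke Mumford's theory of good singular hermitian metrics. For a neat $\Gamma$, the Bergman metric on $\BB$ descends to $Y(\Gamma)$ and extends across $T_{\Gamma}$ to a metric which is singular on $\Omega^{1}_{\widetilde{Y(\Gamma)}}$ but ``good'' on the logarithmic cotangent bundle $\Omega^{1}_{\widetilde{Y(\Gamma)}}(\log T_{\Gamma})$. Its Chern--Weil forms are $L^{1}$, and their integrals compute the Chern numbers of $\Omega^{1}_{\widetilde{Y(\Gamma)}}(\log T_{\Gamma})$. Directly from the definition of $c_{2}(Y(\Gamma))$ and $c_{1}^{2}(Y(\Gamma))$ as volume integrals one therefore obtains
\begin{align*}
c_{2}(Y(\Gamma))&=c_{2}\bigl(\Omega^{1}_{\widetilde{Y(\Gamma)}}(\log T_{\Gamma})\bigr),\\
c_{1}^{2}(Y(\Gamma))&=\bigl(c_{1}(\Omega^{1}_{\widetilde{Y(\Gamma)}}(\log T_{\Gamma}))\bigr)^{2}.
\end{align*}

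For assertion (1), I would use the Poincar\'e residue sequence
\[
0\longrightarrow \Omega^{1}_{\widetilde{Y(\Gamma)}}\longrightarrow \Omega^{1}_{\widetilde{Y(\Gamma)}}(\log T_{\Gamma})\longrightarrow \bigoplus_{\kappa}\OO_{E_{\kappa}}\longrightarrow 0,
\]
together with multiplicativity of the total Chern class and $c(\OO_{T_{\Gamma}})=(1-T_{\Gamma})^{-1}$ (using the pairwise disjointness of the $E_\kappa$). Collecting terms in codimension $2$ yields
\[
c_{2}\bigl(\Omega^{1}_{\widetilde{Y(\Gamma)}}(\log T_{\Gamma})\bigr)= c_{2}(\widetilde{Y(\Gamma)})+K\cdot T_{\Gamma}+T_{\Gamma}^{2}.
\]
Adjunction on each elliptic component gives $0=2g(E_{\kappa})-2=(K+E_{\kappa})\cdot E_{\kappa}$, hence $K\cdot E_{\kappa}=-E_{\kappa}^{2}$. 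Summing over the disjoint components yields $K\cdot T_{\Gamma}=-T_{\Gamma}^{2}$, so the last two terms cancel and (1) follows. For (2), the first Chern class of the logarithmic cotangent bundle is $c_{1}(\Omega^{1}_{\widetilde{Y(\Gamma)}}(\log T_{\Gamma}))=K+T_{\Gamma}$. Squaring and again using $K\cdot T_{\Gamma}=-T_{\Gamma}^{2}$,
\[
c_{1}^{2}(Y(\Gamma))=(K+T_{\Gamma})^{2}=K^{2}-T_{\Gamma}^{2}=c_{1}^{2}(\widetilde{Y(\Gamma)})-(T_{\Gamma}.T_{\Gamma}).
\]
Finally, Mumford's proportionality for the complex $2$-ball (the compact dual being $\PP_{2}$, where $c_{1}^{2}=3c_{2}=9$) yields $c_{1}^{2}(Y(\Gamma))=3c_{2}(Y(\Gamma))$; combined with (1) this rearranges to $c_{1}^{2}(\widetilde{Y(\Gamma)})=3c_{2}(Y(\Gamma))+(T_{\Gamma}.T_{\Gamma})$.

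The main obstacle is the analytic input of the second paragraph: verifying that the Bergman metric extends to a \emph{good} singular hermitian metric on $\Omega^{1}_{\widetilde{Y(\Gamma)}}(\log T_{\Gamma})$ in Mumford's sense, so that the volume integrals in the definition of $c_{2}(Y(\Gamma))$ and $c_{1}^{2}(Y(\Gamma))$ actually compute the characteristic numbers of this bundle. This requires the explicit Siegel-domain description of a cusp neighborhood in the toroidal compactification together with precise growth estimates of the Bergman metric along the boundary; for $\su(2,1)$ these are carried out in \cite[Chapter 4]{Ho:bsa} following Mumford. Granting this comparison, the rest is the elementary Chow-ring bookkeeping above.
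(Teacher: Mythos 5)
Your argument is correct. Note, however, that the paper does not prove this proposition at all: it is quoted verbatim from Holzapfel \cite[Proposition 4.3.6]{Ho:bsa}, so there is no in-paper proof to compare against. What you supply is the standard derivation, and it is complete modulo the analytic input you yourself flag: Mumford's proportionality identifies the volume integrals defining $c_2(Y(\Gamma))$ and $c_1^2(Y(\Gamma))$ with the Chern numbers of $\Omega^1_{\widetilde{Y(\Gamma)}}(\log T_{\Gamma})$ and gives the ratio $3$ from the compact dual $\PP_2$; the residue sequence plus disjointness of the boundary components gives $c_2(\Omega^1(\log T_{\Gamma}))=c_2(\widetilde{Y(\Gamma)})+K\cdot T_{\Gamma}+T_{\Gamma}^2$ and $c_1(\Omega^1(\log T_{\Gamma}))=K+T_{\Gamma}$; and adjunction on the elliptic curves $E_{\kappa}$ gives $K\cdot T_{\Gamma}=-T_{\Gamma}^2$, from which both formulas follow by the bookkeeping you describe. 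This is essentially the route taken in the cited source (which establishes the goodness of the extended Bergman metric in its Chapter 4), so your proof is a faithful reconstruction rather than a genuinely different approach; its one non-elementary ingredient is precisely the verification that the Bergman metric is good in Mumford's sense on the toroidal compactification, which you correctly defer to \cite[Chapter 4]{Ho:bsa}.
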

 
\subsection{Chern numbers of quotients by principal congruence subgroups}
By the above propostion \ref{invpic}, two magnitudes are essential for the computation of Chern invariants of the smooth compactification of a Picard modular surface $Y(\Gamma)=\Gamma\backslash \BB$, namely the volume $c_2(Y(\Gamma))$ and the selfintersection number $(T_{\Gamma}.T_{\Gamma})$. In the case of a ball quotient by a principal congruence subgroup this two numbers can be given in terms of a number theoretic expression. Let us shortly write $Y(\aaa)$ for the ball quotient $\Gamma_K(\aaa)\backslash \BB$ by the principal congruence subgroup $\Gamma_K(\aaa)$. In this context we also write $Y(1)$ for the quotient $\Gamma_K\backslash \BB$ by a full Picard modular group.
\begin{proposition}(see \cite[Thm.~5A.4.7]{Ho:bsa})
\label{vol}
Let $K=\QQ(\sqrt{-d})$ and let $\Gamma_{K}$ be the full Picard modular group corresponding to $K$. Let $D$ be the discriminant of $K$ and $\chi_D(\;)=\left ( \frac{D}{\cdot}\right )$ the Dirichlet character associated to $K$. Then
\[c_2(Y(1))=c_2(\Gamma_K\backslash \BB)=\delta_K\frac{|D|^{5/2}}{32\pi^3}L(3,\chi_D),\]
where $L(s,\chi_D)$ denotes the Dirichlet L-function associated with $\chi_D$, and $\delta_K$ is defined as $\delta_K=1/3$ if $d=3$, and $\delta_K=1$ otherwise.\\
Consequently, for every integral ideal $\aaa$ of $K$ and the principal congruence subgroup $\Gamma_K(\aaa)$ 
\[c_2(Y(\aaa))=[\Gamma_K:\Gamma_K(\aaa)]\delta_K\frac{|D|^{5/2}}{32\pi^3}L(3,\chi_D).\] 
\end{proposition}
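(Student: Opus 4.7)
The second formula follows immediately from the first. Since $\Gamma_K(\aaa)$ has finite index in $\Gamma_K$, a fundamental domain $F(\Gamma_K(\aaa))$ is, up to a set of measure zero, the disjoint union of $[\Gamma_K:\Gamma_K(\aaa)]$ translates of $F(\Gamma_K)$ by coset representatives of $\Gamma_K(\aaa)\backslash\Gamma_K$. Since the Chern form $\eta_2$ is invariant under $\G(\RR)$, integration yields
\[c_2(Y(\aaa))=\int_{F(\Gamma_K(\aaa))}\eta_2=[\Gamma_K:\Gamma_K(\aaa)]\int_{F(\Gamma_K)}\eta_2=[\Gamma_K:\Gamma_K(\aaa)]\,c_2(Y(1)),\]
so only the first formula requires real work.

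For the first formula, the strategy proceeds in two stages. The first stage applies the Hirzebruch-Mumford proportionality principle to express $c_2(Y(1))$ as a universal constant times the hyperbolic covolume $\vol(\Gamma_K\backslash\BB)$. This universal constant is pinned down by comparison with the compact dual $\PP^2(\CC)$, on which $c_2=3$; concretely, $\eta_2$ equals $3/(8\pi^2)$ times the Bergman-K\"ahler volume form on $\BB$. The second stage computes $\vol(\Gamma_K\backslash\G(\RR))$ via Prasad's volume formula for simply connected arithmetic groups. For $\G=\su(h)$ over the imaginary quadratic field $K$ of discriminant $D$, this produces a product of an archimedean contribution yielding the $\pi^3$ (from $\Gamma$-values associated to the exponents $1,2$ of the root system $A_2$), the global discriminant factor $|D|^{5/2}$, and finite-place factors which, via the factorization $\zeta_K(s)=\zeta(s)L(s,\chi_D)$, collapse to the single special value $L(3,\chi_D)$. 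Combining the two stages yields the overall constant $1/(32\pi^3)$.

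The factor $\delta_K$ arises from the kernel of the action of $\Gamma_K$ on $\BB$. This kernel consists of the scalar matrices $\lambda I_3$ with $\lambda\in\OO_K^{\ast}$ and $\lambda^3=1$; the hermitian condition $|\lambda|^2=1$ is automatic, since $N_{K/\QQ}(\lambda)=1$ for every unit of an imaginary quadratic field. Nontrivial cube roots of unity lie in $\OO_K$ only when $K=\QQ(\sqrt{-3})$, giving a central kernel of order $3$ in that case and trivial kernel otherwise; dividing the group-theoretic covolume by $|Z(\Gamma_K)|$ produces $\delta_K=1/3$ for $d=3$ and $\delta_K=1$ in all other cases. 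The principal obstacle in carrying out this program is the delicate bookkeeping of constants in Prasad's volume formula, particularly the treatment of the local factors at ramified and inert primes; since that detailed computation is precisely the content of \cite[Thm.~5A.4.7]{Ho:bsa}, I would simply invoke that result rather than redo it here.
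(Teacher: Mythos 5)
The paper gives no proof of this proposition at all: it is quoted verbatim from Holzapfel (\cite[Thm.~5A.4.7]{Ho:bsa}), with the subsequent remark pointing to Prasad's volume formula as an alternative source. So your decision to defer the hard analytic computation to the cited theorem, after sketching the proportionality-plus-volume-formula strategy, is entirely in line with what the paper does, and your identification of $\delta_K$ with the reciprocal of the order of the kernel of the action of $\Gamma_K$ on $\BB$ (the cube roots of unity, nontrivial exactly for $d=3$) is the right explanation of that factor.

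The one step you actually carry out — deducing the second formula from the first by tiling a fundamental domain — has a gap precisely in the case $d=3$ that your own discussion of $\delta_K$ makes visible. When $K=\QQ(\sqrt{-3})$ the center $Z=\{\,\omega^j 1_3\,\}$ of order $3$ lies in $\Gamma_K$ and acts trivially on $\BB$, so two coset representatives of $\Gamma_K(\aaa)\backslash\Gamma_K$ differing by a central element produce the \emph{same} translate of $F(\Gamma_K)$. The covering $Y(\aaa)\to Y(1)$ therefore has degree $[\Gamma_K:Z\cdot\Gamma_K(\aaa)]=[\Gamma_K:\Gamma_K(\aaa)]/3$ (for $N(\aaa)>3$ the neat group $\Gamma_K(\aaa)$ meets $Z$ trivially), not $[\Gamma_K:\Gamma_K(\aaa)]$. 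You cannot simultaneously account for the factor $1/3$ by dividing the covolume by $|Z|$ in the first formula \emph{and} use the naive count of $[\Gamma_K:\Gamma_K(\aaa)]$ translates in the second: that books the same factor of $3$ twice. The consistent reading of the proposition is that $\delta_K\frac{|D|^{5/2}}{32\pi^3}L(3,\chi_D)$ is the covolume of $\Gamma_K$ in the group-theoretic normalization (i.e.\ $\tfrac{1}{|Z|}\int_{F(\Gamma_K)}\eta_2$), for which multiplicativity in the full matrix-group index $[\Gamma_K:\Gamma_K(\aaa)]$ does hold; the geometric volume $\int_{F(\Gamma_K)}\eta_2$ of the orbifold $Y(1)$ is then $|D|^{5/2}L(3,\chi_D)/32\pi^3$ without the $\delta_K$. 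For $d\neq 3$ the center is trivial and your argument is correct as written. This is a bookkeeping point rather than a flaw in strategy, but as written your two paragraphs contradict each other by a factor of $3$ in the Eisenstein case.
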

\begin{remark}
The above volume formula can be seen as a special case of a general formula developed by G.~Prasad, expressing the volume of a quotient of a Lie group by an arithmetic subgroup, by values of L-functions. The reader can consult \cite{PY} for the case of the unitary group corresponding to a hermitian form of signature $(2,1)$.
\end{remark}

A similar arithmetic expression for the selfintersection number of the compactification divisor can be deduced using 
\cite[Lemma 4.7, Lemma 4.8]{Ho:min}: 
\begin{proposition}
\label{self}
Let $h_K$ denote the ideal class number of $K$ and let $\eta_K$ and $\vartheta_{\aaa}$ be defined as
 \[\vartheta_{\aaa}=\min \{n\in\NN\mid \aaa|n\sqrt{-d}\},\]
\[
\eta_K=\left\{
\begin{array}{ll} 
1 & \textrm{if}\ D=-4\\
1/6 & \textrm{if}\ D=-3\\
2 & \textrm{if}\ D\neq-4,D\equiv 0\bmod 4\\
1/2 & \textrm{if}\ D\neq -3, D\equiv 1\bmod 4\\
\end{array} \right.
\]
Then, we have the following formula for the selfintersection number $(T_{\Gamma_K(\aaa)}.T_{\Gamma_K(\aaa)})$:
\[(T_{\Gamma_K(\aaa)}.T_{\Gamma_K(\aaa)})=-\frac{h_K\eta_K}{\vartheta_{\aaa}^2}[\Gamma_K:\Gamma_K(\aaa)]. \]
\end{proposition}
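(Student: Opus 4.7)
The plan is to reduce the global selfintersection to a sum of local contributions at individual cusps, compute each local contribution from the lattice structure of the unipotent radical at the cusp, and assemble the answer using the cusp count for $\Gamma_K(\aaa)$.

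The first step is the decomposition. Since $\Gamma_K(\aaa)$ is neat under our standing hypothesis (Lemma \ref{neat}), the toroidal compactification replaces each cusp $\kappa\in\partial_{\Gamma_K(\aaa)}\BB$ by a smooth elliptic curve $E_\kappa$, and distinct $E_\kappa$ are disjoint in $\widetilde{Y(\aaa)}$. Hence the intersection form is diagonal on $T_{\Gamma_K(\aaa)}$ and $(T_{\Gamma_K(\aaa)}.T_{\Gamma_K(\aaa)})=\sum_\kappa (E_\kappa.E_\kappa)$.

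The second step is to compute one $(E_\kappa.E_\kappa)$. Fix a minimal rational parabolic $\Pp$ stabilising the chosen cusp, with unipotent radical $\mathbf{N}$. The group $\mathbf{N}$ is a Heisenberg group of real dimension $3$, whose one-parameter centre $Z(\mathbf{N})$ is identified, in suitable partial-Cayley coordinates (see \cite[Chapter 4]{Ho:bsa}), with the additive group $\sqrt{-d}\,\RR$. The exceptional curve $E_\kappa$ arises as $Z(\mathbf{N})/\Lambda_{\Gamma_K(\aaa)}(\kappa)$, where $\Lambda_{\Gamma_K(\aaa)}(\kappa):=\Gamma_K(\aaa)\cap Z(\mathbf{N})$, and its selfintersection is governed by the index of this lattice inside the reference lattice $\ZZ\sqrt{-d}$ attached to the full group $\Gamma_K$. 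By the local computations of \cite[Lemma 4.7, Lemma 4.8]{Ho:min}, transcribed from rational levels $N$ to ideal levels, $(E_\kappa.E_\kappa)=-\eta_K/\vartheta_\aaa^2$: the numerator $\eta_K$ records the torsion corrections coming from the stabiliser of $\kappa$ in $\Gamma_K$ modulo $\mathbf{N}$, while $\vartheta_\aaa$ is by its very definition the smallest positive integer $n$ with $n\sqrt{-d}\in\aaa$, and so the generator of $\Lambda_{\Gamma_K(\aaa)}(\kappa)$ in the $\sqrt{-d}$-coordinate.

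The third step is the cusp count. The $\Gamma_K$-orbits on the rational isotropic lines $\partial_\QQ\BB$ are in bijection with the ideal class group, yielding $h_K$ cusps of $Y(1)$. The finite group $\Gamma_K/\Gamma_K(\aaa)$ acts on the cusps of $Y(\aaa)$ lying over each of them, and via the orbit-stabiliser theorem combined with the strong approximation statement of Lemma \ref{strongappr}, the total number of cusps of $Y(\aaa)$, weighted by the common local contribution $-\eta_K/\vartheta_\aaa^2$, equals $-h_K\eta_K[\Gamma_K:\Gamma_K(\aaa)]/\vartheta_\aaa^2$, which is the asserted formula. The main obstacle, I expect, will be the bookkeeping for the four-valued constant $\eta_K$: its dependence on whether $D\in\{-3,-4\}$ or $D\equiv 0,1\pmod 4$ reflects the presence or absence of extra torsion in $\OO_K^\times$ and in the non-unipotent part of the cusp stabiliser in $\Gamma_K$, and one must verify that precisely this torsion correction is inherited when passing to $\Gamma_K(\aaa)$. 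The neatness of $\Gamma_K(\aaa)$ is what makes this clean: all torsion sits inside the deck group $\Gamma_K/\Gamma_K(\aaa)$, and its effect on the ramification of $\widetilde{Y(\aaa)}\to \widetilde{Y(1)}$ along the compactification divisor is the $\aaa$-independent factor $\eta_K$.
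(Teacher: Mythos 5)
First, note that the paper itself does not prove Proposition \ref{self}: it only states that the formula ``can be deduced using \cite[Lemma 4.7, Lemma 4.8]{Ho:min}''. So your proposal is being measured against the standard argument behind those lemmas rather than against an in-text proof. Your overall architecture (diagonalize the intersection form over the disjoint curves $E_\kappa$, compute local contributions, count cusps using normality of $\Gamma_K(\aaa)$ and the $h_K$ cusps of $Y(1)$) is the right one. But two of your intermediate claims are wrong, and they happen to cancel. First, the curve $E_\kappa$ is \emph{not} $Z(\mathbf{N})/\Lambda_{\Gamma_K(\aaa)}(\kappa)$: the centre $Z(\mathbf{N})$ is one-dimensional over $\RR$, and its quotient by a lattice is a circle. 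The elliptic curve is the quotient of $\mathbf{N}/Z(\mathbf{N})\cong\CC$ by the image of the cusp lattice; the centre lattice enters only through the degree of the normal bundle, i.e.\ $(E.E)=-\vol(\CC/\Lambda_0)/\tau$ with $\tau$ the positive generator of $\Lambda\cap Z(\mathbf{N})$.

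Second, the pair of claims ``each $(E_{\kappa'}.E_{\kappa'})=-\eta_K/\vartheta_\aaa^2$'' and ``(number of cusps of $Y(\aaa)$)$\,\times\,(-\eta_K/\vartheta_\aaa^2)$ equals $-h_K\eta_K[\Gamma_K:\Gamma_K(\aaa)]/\vartheta_\aaa^2$'' forces the number of cusps of $Y(\aaa)$ to be $h_K[\Gamma_K:\Gamma_K(\aaa)]$, which is false for $\aaa\neq(1)$: since $\Gamma_K(\aaa)$ is normal, the fibre over a cusp $\kappa$ of $Y(1)$ has cardinality $[\Gamma_K:\Gamma_K(\aaa)]/[\Gamma_{K,\kappa}:\Gamma_K(\aaa)_\kappa]$, and the stabilizer index in the denominator is large (it contains the image of the unipotent cusp lattice). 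The correct local statement is that if $a=[\Lambda_0:\Lambda_0']$ is the index of the image lattices in $\CC$ and $\vartheta_\aaa=[\Lambda\cap Z:\Lambda'\cap Z]$, then $(E_{\kappa'}.E_{\kappa'})=(a/\vartheta_\aaa)(E_\kappa.E_\kappa)=\bigl([\Lambda:\Lambda']/\vartheta_\aaa^2\bigr)(E_\kappa.E_\kappa)$, so each individual selfintersection carries a factor of the local covering degree $a$, not the uniform value $-\eta_K/\vartheta_\aaa^2$. Only after multiplying by the cusp count $[\Gamma_K:\Gamma_K(\aaa)]/[\Lambda:\Lambda']$ (modulo the torsion correction absorbed into $\eta_K$) do the local degrees cancel and the clean factor $[\Gamma_K:\Gamma_K(\aaa)]/\vartheta_\aaa^2$ emerge. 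As written, both of your intermediate assertions are false even though their product is the correct answer; the proof needs this bookkeeping made explicit.
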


\subsection{Values of Dirichlet L-functions at integers}
\label{bernoulli}
In order to get concrete values of Chern numbers of Picard modular surfaces, we need to know the value $L(3,\chi_D)$ explicitely. Let us briefly discuss the well-known method for the computation of this number.\\

We first recall the functional equation for Dirichlet L-functions (see, for instance, \cite{Wash}, chapter 4, in particular Thm.~4.2). This equation implies 

\[L(3,\chi_D)=-\frac{2\pi^3}{|D|^{5/2}}L(-2,\chi_D)=\frac{2\pi^3}{3|D|^{5/2}}B_{3,\chi_D}.\]

There, $B_{n,\chi_D}$ denotes the $n$-th generalized Bernoulli number associated with $\chi_D$ (see \cite[chapter 4]{Wash} for the definition). The generalized Bernoulli numbers can be computed easily by the following formula, which for instance can be found in \cite[Proposition 4.2]{Wash}:
\begin{equation}
\label{bernf}
B_{n,\chi_D}=|D|^{n-1}\sum_{k=1}^{|D|}\chi_D(k)B_n(\frac{k}{|D|}).
\end{equation}
In the above formula $B_n(X)$ denotes the $n$-th Bernoulli polynomial which can be defined as 
\[B_n(X)=\sum_{k=0}^n \binom{n}{k}B_kX^{n-k},\]
$B_k$ denoting the usual $k$-th Bernoulli number. It is now easy to provide a list of values $L(3,\chi_D)$, knowing $B_3(X)=\frac{1}{2}X-\frac{3}{2}X^2+X^3$. Here we give the first values of $B_{3,\chi_D}$:
\[
\begin{array}{c|c|c|c|c|c|c|c|c|c|c|c|c}
 |D| & 3 & 4 & 7 & 8 & 11 & 15 & 19 & 20 & 23 & 24 & 31 & 35\\
\hline
B_{3,\chi_D} & 2/3 & 3/2 & 48/7 & 9 & 18 & 48 & 66 & 90 & 144 & 138 & 288 & 108
\end{array}
\]
 
\section{Applications}
In this section we discuss some applications of the explicit formulas for the Chern invariants of compactified ball quotients by principal congruence subgroups. 

\subsection{Classification}

\begin{theorem}
Let $\aaa$ be an integral ideal of $K=\QQ(\sqrt{-d})$ such that $N(\aaa)>3$ and $\aaa\cap \ZZ$ and the prime 2 are  coprime. The smooth compactification $\widetilde{Y(\aaa)}=\widetilde{\Gamma_K(\aaa)\backslash\BB}$ is a surface of general type, with the (additional) possible exception for $K=\QQ(\sqrt{-7})$ and $\aaa\cap\ZZ$ ramified in $K$.
\end{theorem}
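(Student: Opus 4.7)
The plan is to combine the explicit Chern number formulas from Section 3 with the Enriques--Kodaira classification of surfaces. First, since $N(\aaa)>3$ and $\aaa\cap\ZZ$ is coprime to $2$, Lemma \ref{neat} applies and gives that $\Gamma_K(\aaa)$ is neat, so the smooth toroidal compactification $\widetilde{Y(\aaa)}$ exists, the compactification divisor $T_{\Gamma_K(\aaa)}$ is a disjoint union of smooth elliptic curves, and Proposition \ref{invpic} applies verbatim.

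Next I would argue that $\widetilde{Y(\aaa)}$ is minimal in the sense of Castelnuovo. The open part $Y(\aaa)$ is a free quotient of $\BB$, hence Kobayashi hyperbolic via the Bergman metric of negative holomorphic sectional curvature, and in particular admits no non-constant image of $\CC$ and no rational curves. Any rational curve on $\widetilde{Y(\aaa)}$ would therefore have to lie entirely inside $T_{\Gamma_K(\aaa)}$, but the latter is a union of elliptic curves. Hence $\widetilde{Y(\aaa)}$ has no $(-1)$-curves, it is minimal, and by the Enriques--Kodaira classification it is of general type if and only if $c_1^2(\widetilde{Y(\aaa)})>0$. The problem thus reduces to verifying this positivity.

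Combining Propositions \ref{invpic}, \ref{vol}, \ref{self} with the identity $L(3,\chi_D)=\frac{2\pi^3}{3|D|^{5/2}}B_{3,\chi_D}$ from Section \ref{bernoulli}, I would derive the closed form
\[
c_1^2(\widetilde{Y(\aaa)})=[\Gamma_K:\Gamma_K(\aaa)]\left(\frac{\delta_K B_{3,\chi_D}}{16}-\frac{h_K\eta_K}{\vartheta_\aaa^2}\right),
\]
so the theorem becomes equivalent to the arithmetic inequality
\[
\vartheta_\aaa^{\,2} \;>\; \frac{16\, h_K\,\eta_K}{\delta_K\, B_{3,\chi_D}}.
\]

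The remaining and main step is a case analysis on the imaginary quadratic field $K$. For $|D|$ growing, $B_{3,\chi_D}$ is of order $|D|^{5/2}L(3,\chi_D)$ while $h_K$ grows only like $\sqrt{|D|}$ up to logarithmic factors, so the right-hand side tends to zero and the inequality is automatic outside a finite list of small discriminants. For each of these remaining cases, one reads off the threshold from the table in Section \ref{bernoulli} and, using the elementary relationship between $\vartheta_\aaa$, $N(\aaa)$ and the decomposition type of $\aaa\cap\ZZ$ in $K$, checks that every admissible $\aaa$ satisfies the bound. I expect this bookkeeping to be the principal obstacle, since $\vartheta_\aaa$ depends sensitively on ramification of $\aaa\cap\ZZ$. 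The obstruction in $K=\QQ(\sqrt{-7})$ appears precisely here: the threshold evaluates to $7/6$, so one needs $\vartheta_\aaa\ge 2$, but for the ramified prime ideal $\aaa=(\sqrt{-7})$ one has $\vartheta_\aaa=1$; this borderline behaviour when $\aaa\cap\ZZ$ is ramified in $\QQ(\sqrt{-7})$ is the origin of the ``possible exception'' in the statement.
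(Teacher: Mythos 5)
Your arithmetic core --- the closed form $c_1^2(\widetilde{Y(\aaa)})=[\Gamma_K:\Gamma_K(\aaa)]\bigl(\tfrac{\delta_K}{16}B_{3,\chi_D}-\tfrac{h_K\eta_K}{\vartheta_\aaa^2}\bigr)$ (your coefficient $\delta_K/16$ agrees with the numerical values the paper actually uses in its case-by-case check), the asymptotic argument comparing $B_{3,\chi_D}\sim |D|^{5/2}L(3,\chi_D)$ against $h_K$ for large $|D|$, the table check for small $|D|$, and the threshold $7/6$ isolating the ramified prime of $\QQ(\sqrt{-7})$ --- is essentially the paper's computation; the paper additionally reduces to prime ideals $\pP\mid\aaa$ via monotonicity of the Kodaira dimension under finite covers, which you bypass by estimating $\vartheta_\aaa$ directly, and either route is acceptable. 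The genuine gap is in your reduction of ``general type'' to ``$c_1^2>0$'' via minimality.

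The inference ``$Y(\aaa)$ is hyperbolic, hence any rational curve on $\widetilde{Y(\aaa)}$ lies inside $T_{\Gamma_K(\aaa)}$'' does not hold. A rational curve $C\not\subset T_{\Gamma_K(\aaa)}$ only yields a nonconstant holomorphic map from $C\setminus T_{\Gamma_K(\aaa)}$, i.e.\ from $\PP^1$ minus finitely many points, into $Y(\aaa)$; if $C$ meets the compactification divisor in three or more points, this punctured sphere is itself hyperbolic and no contradiction with the Kobayashi hyperbolicity of $Y(\aaa)$ arises. Smooth toroidal compactifications of neat ball quotients can in fact carry $(-1)$-curves meeting the boundary elliptic curves --- there are classical examples (going back to Hirzebruch and to Holzapfel himself) where $\widetilde{Y(\Gamma)}$ is a blown-up abelian surface, hence non-minimal and not of general type --- so minimality cannot be deduced from neatness and hyperbolicity of the open part alone. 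Your classification criterion is also incomplete even granting minimality, since minimal rational surfaces have $c_1^2\in\{8,9\}>0$ without being of general type. The repair is exactly the paper's route: every smooth projective surface that is not of general type satisfies $c_1^2\le 9$ (the minimal models of the non-general-type classes have $c_1^2\le 9$, and blowing up only lowers $c_1^2$), so no minimality discussion is needed and it suffices to prove $c_1^2(\widetilde{Y(\aaa)})>9$. Since Theorem \ref{ind} gives $[\Gamma_K:\Gamma_K(\pP)]>9$ whenever $N(\pP)>3$ and $\pP\cap\ZZ\neq(2)$, this amounts to showing the bracket is at least $1$ rather than merely positive, and your case analysis (suitably sharpened) goes through for that stronger bound; it then reproduces the paper's proof, including the $D=-7$ exception.
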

\begin{proof}
First we note that $\widetilde{Y(\aaa)}$ is smooth and compact algebraic surface, since $\Gamma_K(\aaa)$ is neat by Lemma \ref{neat} (see also Theorem.~\ref{comp}). In order to show that $\widetilde{Y(\aaa)}$ is of general type it is sufficient to check that $c_1^2(\widetilde{Y(\aaa)})>9$. This follows from the classification theory of algebraic surfaces. It is also sufficient to check the inequality only for the surfaces $\widetilde{Y(\pP)}$, where $\pP$ is a prime ideal, since the Kodaira dimension can not decrease by the passage to a finite covering. Hence, if $\widetilde{Y(\pP)}$ is of general type, then also $\widetilde{Y(\aaa)}$ for any ideal $\aaa$ such that $\pP|\aaa$.\\
From Lemma \ref{invpic}, Proposition \ref{vol}, and Proposition \ref{self}, we obtain the following formula for $c_1^2(\widetilde{Y(\pP)})$:
\begin{equation}
\label{c12}
c_1^2(\widetilde{Y(\pP)})=[\Gamma_K:\Gamma_K(\aaa)]\left (\frac{3\delta_K|D|^{5/2}}{32\pi^3}L(3,\chi_D)-\frac{h_K\eta_K}{\vartheta_{\pP}^2}\right )
\end{equation}
Looking at the index formula in Theorem \ref{ind}, we see that $[\Gamma_K:\Gamma_K(\pP)]>9$ if $N(\pP)>3$ and $\pP\cap \ZZ\neq (2)$. Thus, it suffices to show that 
\begin{equation}
\label{schlgl}
\frac{3\delta_K|D|^{5/2}}{32\pi^3}L(3,\chi_D)-\frac{h_K\eta_K}{\vartheta_{\pP}^2}\geq 1
\end{equation}
First we show that the inequality (\ref{schlgl}) holds, if $|D|> 35$.\\
Recall first the analytic class number formula 
\[
h_K=\frac{\mu_K\sqrt{-D}}{2\pi}L(1,\chi_D)=-(\mu_K/4)B_{1,\chi_D},
\] 
where $\mu_K$ denotes the number of roots of unity contained in $K$. Using the facts on generalized Bernoulli numbers summarized in section \ref{bernoulli}, and in particular the formula (\ref{bernf}), we get a trivial bound $h_K\leq |D|/4$ which holds for $|D|\neq 3,4$. On the other hand, comparing the Euler factors the value $L(3,\chi_D)$ can be estimated below by a known value of the Riemann zeta function, namely $L(3,\chi_D)>1/\sqrt{\zeta_{\QQ}(2)}=\sqrt{6}/\pi$. More precisely, one shows that $(1-p^{-2})(1-\chi_D(p)p^{-3})^2<1$ for all primes, from which the lower bound follows. Noting additionally that always $\eta_K\leq 2$ and $\vartheta_{\pP}\geq 1$, the inequality (\ref{schlgl}) will follow from the inequality 
\begin{equation}
 \label{schlgl2}
|D|^{5/2}>\frac{16\pi^4}{3\sqrt{6}}(2+|D|)
\end{equation}
This inequality holds for $|D|> 35$.\\
For discriminants $D$ with $|D|\leq 35$ we have to check the inequality $c_1^2(\widetilde{Y(\pP)})>9$ case by case. For this, one first uses the functional equation to reformulate the formula (\ref{c12}) as 
\[
c_1^2(\widetilde{Y(\pP)})=[\Gamma_K:\Gamma_K(\aaa)]\left (\frac{3\delta_K}{16}B_{3,\chi_D}-\frac{h_K\eta_K}{\vartheta_{\pP}^2}\right )
 \]
and than uses the exact values $B_{3,\chi_D}$ from the table in section \ref{bernoulli}. For instance, let us consider the case $D=-35$. With $h_K=2$, $\eta_K=1/2$ and $B_3,\chi_{-35}=108$ we have $c_1^2=[\Gamma_K:\Gamma_K(\pP)](81/4-1/\vartheta_{\pP}^2)\geq 81 [\Gamma_K:\Gamma_K(\pP)]/4 > 9$. The other cases 
are treated in the same manner. The cases $|D|=-3,-4,-7,-8$ need to be considered more carefully, since in these situations $c_1^2$ can be negative.\\
$D=-8$. Let us shortly write $I_{\pP}$ instead of $[\Gamma_K:\Gamma_K(\pP)]$. For the selfintersection we have: $c_1^2=I_{\pP}\{27/48-2/\vartheta_{\pP}^2\}$. Now, it is easy to see that $c_1^2< 0$ if and only if $\vartheta_{\pP}=1$ which is exactly the case when $p=\pP\cap\ZZ$ is ramified in $K=\QQ(\sqrt{-2})$. From this it follows that $p=2$, and this case is excluded. Otherwise, $c_1^2>9$ if $I_{\pP}> 144$. This is the case for $N(\pP)>3$ and $p$ unramified, as the index formula shows.\\
$D=-7$. Here we have $c_1^2=I_{\pP}\{3/7-1/2\vartheta^2_{\pP}\}$. This expression is negative if and only if $\vartheta_{\pP}=1$, which means that $p=\pP\cap\ZZ$ is ramified. This is the exeption we have to exclude. Indeed, for $\pP=(\sqrt{-7})$ we have $c_2=48$ and $c_1^2=-24$. In other cases $c_1^2>9$.\\
$D=-4$. Here we get a negative $c_1^2$ for $\vartheta_{\pP}=1$ or $2$. But $\vartheta_{\pP}=1$ is only possible for $\pP=(1)$ and $\vartheta_{\pP}=2$ is only possible for those $\pP$ which contain the prime $2$.\\
$D=-3$. In this case $c_1^2<0$ is only possible for a prime ideal $\pP$ such that $p$ is ramified in $K$. But then $N(\pP)=3$, a case which is excluded.\\ 
\end{proof}

The invariants of $\widetilde{Y(\aaa)}$ are in general very large due to the fact that the index, which grows very fast with the norm of the ideal, dominates the expressions. However, the surfaces $\widetilde{Y(\aaa)}$ are interesting from the point of view of surface geography, since the pairs $(c_1^2(\widetilde{Y(\aaa)}),c_2(\widetilde{Y(\aaa)}))$ are located in an interesting region of the $(c_1^2,c_2)$-plane, namely the ratio $c_1^2/c_2$ is close to $3$ (see \cite{Ho:min}, and first part of \cite{hunt} for a detailed discussion on surface geography). More precisely the ratio $c_1^2(\widetilde{Y(\aaa)})/c_2(\widetilde{Y(\aaa)})$ tends to $3$ when either the norm $N(\aaa)$ or the discriminant $|D|$ tends to infinity (see also \cite{Ho:min}.)\\ 
This is easily seen, since 
\begin{equation}
\label{ratio}
\frac{c_1^2(\widetilde{Y(\aaa)})}{c_2(\widetilde{Y(\aaa)})}=3-\frac{\frac{h_K\eta_K}{\vartheta_{\aaa}^2}}{\frac{|D|^{5/2}\delta_K}{32\pi^3}L(3,\chi_D)}
 \end{equation}

Now, as $\vartheta_{\aaa}^2\longrightarrow \infty$ for $N(\aaa)\rightarrow \infty$, we see that for a fixed $D$ the ratio tends to 3. Let us, on the other hand, fix $\aaa$ but let $D$ vary. We first observe, as $L(3,\chi_D)$ is bounded by the value of the Riemann zeta function at $s=3$, namely $\zeta(3)^{-1}<L(3,\chi_D)<\zeta(3)$, that there are constants $C$ and $C^{'}$ not depending on $D$ such that 
\begin{equation}
\label{ineq}
C^{'}\frac{h_K}{|D|^{5/2}}\geq \frac{\frac{h_K\eta_K}{\vartheta_{\aaa}^2}}{\frac{|D|^{5/2}\delta_K}{32\pi^3}L(3,\chi_D)}\geq C\frac{h_K}{|D|^{5/2}}
\end{equation}  
There are well known bounds for the class number of an imaginary quadratic field, namely for any $\epsilon>0$ there are constants $C_1=C_1(\epsilon)$ and $C_2=C_2(\epsilon)$ such that $C_1|D|^{1/2+\epsilon}>h_K >C_2|D|^{1/2-\epsilon}$. Bringing this into (\ref{ineq}) we see that the expression $\frac{h_K\eta_K}{\vartheta_{\aaa}^2}/\frac{|D|^{5/2}\delta_K}{32\pi^3}L(3,\chi_D)$ tends to zero as $|D|\rightarrow \infty$. Hence
\[\lim_{|D|\rightarrow \infty}\frac{c_1^2(\widetilde{Y(\aaa)})}{c_2(\widetilde{Y(\aaa)})}=3\] 
Already for small discriminants and norms, the Chern number ratio is very close to $3$, see \cite{Ho:min}, table 1.

\subsection{Dimension formulas} 
Another application of the above explicit formulas for the Chern numbers of Picard modular surfaces is an explicit formula for the dimensions of spaces of cusp forms relative to a Picard modular group $\Gamma$.\\
A holomorphic function $f$ on the ball is called a {\bf cusp form of weight} $k$ {\bf with respect to} $\Gamma$ if it satisfies the following conditions:
\begin{itemize}
\item For all $\gamma\in \Gamma$ and all $z\in \BB$ $f(\gamma z)=j(\gamma,z)^{k}f(z)$, where $j(\gamma,z)$ denotes the determinant of the Jacobian matrix corresponding to the holomorphic map $\gamma:\BB\longrightarrow \BB$ at the point $z$.
\item $f(z)$ vanishes at cusps of $\Gamma$, i.~e.~it vanishes on $\partial_{\Gamma}\BB=\Gamma\backslash\partial_{\QQ}\BB$. 
\end{itemize}
Let $\Gamma$ be a neat Picard modular group. In the following, $S_k(\Gamma)$ will denote the space of cusp forms of weight $k$ and with respect to $\Gamma$. By a result of Hemperley \cite{Hem}, the cusp forms of weight $k$ can be interpreted as the sections of a certain line bundle $\LL^{(k)}$ on the smooth compactification $\widetilde{Y(\Gamma)}$. Thus, $\dim S_k(\Gamma)=\dim H^0(\widetilde{Y(\Gamma)},\LL^{(k)})$. As an application of the Riemann-Roch and the Kodaira vanishing theorem, one gets the following formula for $\dim S_k(\Gamma)$
\begin{theorem}[see \cite{Ho:zetadim}]
Let $\Gamma$ be a neat Picard modular group and $k\geq 2$.
\[
\dim S_k(\Gamma)=[\Gamma_K:\Gamma]\left\{ \frac{9k(k-1)+2}{6}c_2(Y(\Gamma))+\frac{1}{12}(T_{\Gamma}.T_{\Gamma})\right\}
\] 
\end{theorem}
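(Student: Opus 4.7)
The plan is to realize cusp forms of weight $k$ as global sections of an explicit line bundle on $\widetilde{Y(\Gamma)}$, kill the higher cohomology with a Kodaira vanishing argument, and then evaluate the resulting Euler characteristic via Riemann--Roch using the numerical data supplied by Proposition \ref{invpic}.

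First I would apply Hemperley's theorem to write $\dim S_k(\Gamma) = h^0(\widetilde{Y(\Gamma)}, \mathbb{L}^{(k)})$ and identify the divisor class of $\mathbb{L}^{(k)}$. For a neat toroidal compactification of a $2$-dimensional ball quotient the factor of automorphy $j(\gamma,z)^{-3}$ descends to the log-canonical bundle $K_{\widetilde{Y(\Gamma)}} + T_\Gamma$ (weight-$3$ forms are holomorphic $2$-forms, contributing $K$, with controlled logarithmic behaviour along the elliptic compactifying curves, contributing $T_\Gamma$). Because cusp forms are additionally required to vanish along $T_\Gamma$, this yields $\mathbb{L}^{(k)} \cong \mathcal{O}_{\widetilde{Y(\Gamma)}}\bigl(kK + (k-1)T_\Gamma\bigr)$.

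Next I would verify that $H^i(\widetilde{Y(\Gamma)}, \mathbb{L}^{(k)}) = 0$ for $i \geq 1$. For $k \geq 2$ the twist $\mathbb{L}^{(k)} - K$ has divisor class $(k-1)(K + T_\Gamma)$, a positive multiple of the log-canonical class; since $\Gamma$ is neat, $K+T_\Gamma$ is ample on the Baily--Borel compactification and nef and big on $\widetilde{Y(\Gamma)}$, so Kodaira--Ramanujam vanishing applies. This reduces the problem to $\dim S_k(\Gamma) = \chi(\widetilde{Y(\Gamma)}, \mathbb{L}^{(k)})$.

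Finally I would compute this Euler characteristic by Riemann--Roch,
\[
\chi(\mathbb{L}^{(k)}) = \chi(\mathcal{O}_{\widetilde{Y(\Gamma)}}) + \tfrac{1}{2}\,\mathbb{L}^{(k)} \cdot \bigl(\mathbb{L}^{(k)} - K\bigr),
\]
and convert everything into the invariants $c_2(Y(\Gamma))$ and $(T_\Gamma.T_\Gamma)$. Proposition \ref{invpic} gives $c_1^2 = 3c_2(Y(\Gamma)) + (T_\Gamma.T_\Gamma)$ and $c_2 = c_2(Y(\Gamma))$, whence $\chi(\mathcal{O}) = \tfrac{1}{12}\bigl(4c_2(Y(\Gamma)) + (T_\Gamma.T_\Gamma)\bigr)$. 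Adjunction on each elliptic component $E_\kappa$ of $T_\Gamma$ gives $K \cdot E_\kappa = -E_\kappa^2$, hence $K \cdot T_\Gamma = -(T_\Gamma.T_\Gamma)$. Substituting $\mathbb{L}^{(k)} = kK + (k-1)T_\Gamma$ into the intersection numbers and using these identities yields $\mathbb{L}^{(k)} \cdot (\mathbb{L}^{(k)} - K) = 3k(k-1)\,c_2(Y(\Gamma))$ after the $(T_\Gamma.T_\Gamma)$-terms cancel, and collecting coefficients produces exactly $\frac{9k(k-1)+2}{6}\,c_2(Y(\Gamma)) + \frac{1}{12}(T_\Gamma.T_\Gamma)$. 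The hard part is really step one: pinning down the precise divisor class of $\mathbb{L}^{(k)}$, including the correct logarithmic twist by $T_\Gamma$ built in by Hemperley's construction; once that identification is in hand, the vanishing and the Riemann--Roch bookkeeping are essentially mechanical.
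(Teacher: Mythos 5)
Your overall route --- Hemperley's identification of cusp forms with sections of a line bundle $\LL^{(k)}$ on $\widetilde{Y(\Gamma)}$, a vanishing theorem for $k\ge 2$, and Riemann--Roch evaluated via Proposition \ref{invpic} --- is exactly the one the paper invokes (it cites Holzapfel rather than proving the formula), and your intersection-theoretic bookkeeping is correct: with $\LL^{(k)}=kK+(k-1)T_\Gamma$ one has $T_\Gamma\cdot(K+T_\Gamma)=0$, hence $\LL^{(k)}\cdot(\LL^{(k)}-K)=3k(k-1)\,c_2(Y(\Gamma))$, and together with $\chi(\OO)=\frac{1}{12}\bigl(4c_2(Y(\Gamma))+(T_\Gamma.T_\Gamma)\bigr)$ this assembles to the bracketed expression. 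The vanishing step is also sound, with the caveat that $\LL^{(k)}-K=(k-1)(K+T_\Gamma)$ is only nef and big on $\widetilde{Y(\Gamma)}$ (it has degree zero on each $E_\kappa$), so you need Ramanujam/Kawamata--Viehweg rather than Kodaira proper.

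The one genuine problem is in your step one, which you yourself single out as the crux: your justification of the divisor class is internally inconsistent with the bundle you then use. You argue that weight-$3$ forms are holomorphic $2$-forms, i.e.\ that weight $3$ corresponds to $K+T_\Gamma$; that normalization would give $\LL^{(k)}=\frac{k}{3}(K+T_\Gamma)-T_\Gamma$, and the same Riemann--Roch computation would then yield $\frac{(k-1)(k-2)}{6}c_2(Y(\Gamma))+\frac{1}{12}(T_\Gamma.T_\Gamma)$, not the stated formula. The bundle $kK+(k-1)T_\Gamma=k(K+T_\Gamma)-T_\Gamma$ that you actually feed into Riemann--Roch corresponds to weight $1\leftrightarrow K+T_\Gamma$, which is the correct reading of the paper's convention: $j(\gamma,z)$ is the full Jacobian determinant (the cube of the classical automorphy factor), so it is the weight-$1$ forms, not the weight-$3$ forms, that extend to sections of the log-canonical bundle, and the extra $-T_\Gamma$ records the cusp condition. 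You should repair the justification so it matches the bundle used. Finally, note that your derivation produces the bracketed expression \emph{without} the prefactor $[\Gamma_K:\Gamma]$; since $c_2(Y(\Gamma))$ and $(T_\Gamma.T_\Gamma)$ already scale with the index, this agrees with the paper's subsequent application to $\Gamma_K(\aaa)$, and the prefactor in the theorem as printed appears to double-count the index.
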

Now it is clear that we have an explicit formula for the dimensions of spaces of cusp forms of weight $k\geq 2$ which are automorphic with respect to a neat principal congruence subgroup $\Gamma_K(\aaa)$, namely

\[
S_k(\Gamma_K(\aaa))=\frac{[\Gamma_K:\Gamma_K(\aaa)]}{6}\left\{ (9k(k-1)+2)\frac{\delta_K|D^{5/2}|}{32\pi^3}L(3,\chi_D)-\frac{h_K\eta_K}{2\vartheta_{\aaa}^2}\right\}
\]

These dimensions are very large in general, even in the case of small discriminants and small norms.

\bibliography{Refs_Invariants_of_Picard_modular_surfaces}
\bibliographystyle{alpha}

\end{document}